\newdimen\symskip
\newdimen\defskip
\newdimen\parind
\newdimen\leftmarge
\newdimen\theoremshape
\newcommand*{\clei}{\nobreak\hskip\z@skip}
\renewcommand{\:}{\textup{:}}
\renewcommand{\~}{\textup{;}}
\DeclareRobustCommand*{\ti}{~\textemdash{} }
\DeclareRobustCommand*{\dh}{\clei\hbox{-}\clei}
\newcommand{\no}{}
\renewcommand{\@listI}{\settowidth\labelwidth{\labheadi{\no}}\listipar{\parind}{\labelwidth}}
\newcommand{\listivpar}{\topsep\defskip\partopsep0pt\parsep-\parskip\itemsep0.5\topsep}
\newcommand{\listipar}[2]{\rightmargin0pt\leftmargin#1\labelsep#1\advance\labelsep-#2\itemindent0pt\listivpar}
\renewcommand{\@listii}{\settowidth\labelwidth{\labheadii{\@roman{\no}}}\listiipar{\parind}{\labelwidth}}
\newcommand{\listiivpar}{\topsep0.5\defskip\partopsep0pt\parsep-\parskip\itemsep0.5\topsep}
\newcommand{\listiipar}[2]{\rightmargin0pt\leftmargin#1\labelsep#1\advance\labelsep-#2\itemindent0pt\listiivpar}
\def\thempfn{\ifcase\value{footnote}1\or *\or **\or ***\else\@ctrerr\fi}
\renewcommand\footnoterule{%
  \kern-3\p@
  \hrule\@width1in
  \kern2.6\p@}
\renewcommand{\@biblabel}[1]{[#1]}
\renewenvironment{thebibliography}[1]
     {\renewcommand{\refname}{References}%
      \section*{\refname}%
      \@mkboth{\MakeUppercase\refname}{\MakeUppercase\refname}%
      \list{\@biblabel{\@arabic\c@enumiv}}%
           {\itemsep\baselineskip
            \leftmargin\parind
            \settowidth\labelwidth{\@biblabel{#1}}%
            \labelsep\parind\advance\labelsep-\labelwidth
            \@openbib@code
            \usecounter{enumiv}%
            \let\p@enumiv\@empty
            \renewcommand\theenumiv{\@arabic\c@enumiv}}%
      \sloppy
      \clubpenalty4000
      \@clubpenalty\clubpenalty
      \widowpenalty4000%
      \sfcode`\.\@m}
     {\def\@noitemerr
       {\@latex@warning{Empty `thebibliography' environment}}%
      \endlist}
\def\@maketitle{%
  \newpage
  \vskip0.5em%
  UDK \udk%
  \vskip0.5em%
  MSC \msc%
  \vskip1em%
  \begin{center}\bf%
  \let\footnote\thanks%
   {\Large\@author\par}%
   \vskip1.5em%
   {\LARGE\@title\par}%
   \vskip1em%
   {\large\@date}%
  \end{center}%
  \par
  \vskip1.5em}
\def\@title{\@latex@warning@no@line{No \noexpand\title given}}
\renewcommand\sectionmark[1]{%
 \markright{%
  \ifnum \c@secnumdepth >\z@
   \thesection. \ %
  \fi
 #1}}%
\renewcommand{\section}{\@startsection{section}{1}{0pt}%
{5.5ex plus .5ex minus .2ex}{1.5ex plus .3ex}%
{\center\normalfont\Large\bfseries\sffamily\bom}}
\renewcommand{\subsection}{\@startsection{subsection}{2}{0pt}%
{4.5ex plus .4ex minus .2ex}{0.75ex plus .2ex}%
{\center\normalfont\large\bfseries\sffamily\bom}}
\renewcommand{\subsubsection}{\@startsection{subsubsection}{3}{0pt}%
{2.5ex plus .5ex minus .2ex}{1ex plus .2ex}%
{\center\normalfont\bfseries\sffamily\bom}}
\newcommand{\Ss}{\textup{\S\,}}
\def\@postskip@{\hskip.5em\relax}
\def\postsection{.\@postskip@}
\def\postsubsection{.\@postskip@}
\def\postsubsubsection{.\@postskip@}
\def\postparagraph{.\@postskip@}
\def\postsubparagraph{.\@postskip@}
\def\@seccntformat#1{\csname pre#1\endcsname\csname the#1\endcsname\csname post#1\endcsname}
\renewcommand{\thesection}{\textup{\arabic{section}}}
\newcommand{\parr}{\par\addvspace{\defskip}}
\newcommand{\theo}[2]{\newtheorem{#1}{#2}[section]}
\newcommand{\deff}[2]{\newenvironment{#1}{\parr\textbf{#2.}}{\parr}}
\def\@begintheorem#1#2[#3]{%
  \deferred@thm@head{\the\thm@headfont \thm@indent
    \@ifempty{#1}{\let\thmname\@gobble}{\let\thmname\@iden}%
    \@ifempty{#2}{\let\thmnumber\@gobble}{\let\thmnumber\@iden}%
    \@ifempty{#3}{\let\thmnote\@gobble}{\let\thmnote\@iden}%
    \thm@notefont{\bfseries\upshape}%
    \indent%
    \thm@swap\swappedhead\thmhead{#1}{#2}{#3}%
    \the\thm@headpunct
    \thmheadnl 
    \hskip\thm@headsep
  }%
  \ignorespaces}
\renewenvironment{proof}{\setcounter{cas}{0}\parr\pushQED{\qed}\normalfont$\square\quad$}{\setcounter{cas}{0}\popQED\@endpefalse\parr}
\newcommand{\labheadi}[1]{\textup{#1)}}
\newcommand{\labheadii}[1]{\textup{(#1)}}
\newenvironment{nums}[1]{\renewcommand{\no}{#1}\begin{enumerate}}{\end{enumerate}}
\newcommand{\eqn}[1]{\begin{equation}#1\end{equation}}
\newcommand{\equ}[1]{\begin{equation*}#1\end{equation*}}
\newcommand{\case}[1]{\begin{cases}#1\end{cases}}
\newcommand{\rbmat}[1]{\begin{pmatrix}#1\end{pmatrix}}
\def\LT@makecaption#1#2#3{%
  \LT@mcol\LT@cols c{\hbox to\z@{\hss\parbox[t]\LTcapwidth{%
    \sbox\@tempboxa{#1{#2. }#3}%
    \ifdim\wd\@tempboxa>\hsize
      #1{#2. }#3%
    \else
      \hbox to\hsize{\hfil\box\@tempboxa\hfil}%
    \fi
    \endgraf\vskip\baselineskip}%
  \hss}}}
\newenvironment{casks}{%
  \matrix@check\casks\env@casks
}{%
  \endarray\right.%
}
\def\env@casks{%
  \let\@ifnextchar\new@ifnextchar
  \left\lbrack
  \def\arraystretch{1.2}%
  \array{@{}l@{\quad}l@{}}%
}
\newcounter{numt}
\newcounter{col}
\newcounter{coll}
\renewcommand{\ge}{\geqslant}
\renewcommand{\le}{\leqslant}
\newcommand{\fa}{\,\forall\,}
\newcommand{\bes}{\infty}
\newcommand{\es}{\varnothing}
\newcommand{\subs}{\subset}
\newcommand{\sups}{\supset}
\newcommand{\sm}{\setminus}
\newcommand{\wo}{\backslash}
\newcommand{\cln}{\colon}
\newcommand{\nl}{\lhd}
\newcommand{\Ra}{\Rightarrow}
\newcommand{\Lra}{\Leftrightarrow}
\newcommand{\dv}{\smash{\mskip3mu\lower1pt\hbox{\vdots}\mskip3mu}}
\newcommand{\ol}{\overline}
\newcommand{\wt}{\widetilde}
\newcommand{\sst}[1]{\substack{#1}}
\newcommand{\suml}[2]{\sum\limits_{{#1}}^{{#2}}}
\newcommand{\sums}[1]{\sum\limits_{{#1}}}
\newcommand*{\bw}[1]{#1\nobreak\discretionary{}{\hbox{$\mathsurround=0pt #1$}}{}}
\newcommand{\sco}{,\ldots,}
\newcommand{\und}[1]{\underbrace{#1}}
\newcommand{\ha}[1]{\left\langle#1\right\rangle}
\newcommand{\br}[1]{\bigl(#1\bigr)}
\newcommand{\Br}[1]{\Bigl(#1\Bigr)}
\newcommand{\ter}[1]{\textup{(}#1\textup{)}}
\newcommand{\bs}[1]{\bigl[#1\bigr]}
\newcommand{\bse}[1]{\bigl\lceil#1\bigr\rceil}
\newcommand{\bc}[1]{\bigl\{#1\bigr\}}
\newcommand{\BC}[1]{\Bigl\{#1\Bigr\}}
\newcommand{\mbb}{\mathbb}
\newcommand{\mcl}{\mathcal}
\newcommand{\N}{\mbb{N}}
\newcommand{\F}{\mbb{F}}
\newcommand{\Mc}{\mcl{M}}
\newcommand{\Pc}{\mcl{P}}
\newcommand{\ep}{\varepsilon}
\newcommand{\la}{\lambda}
\newcommand{\Sig}{\Sigma}
\newcommand{\ph}{\varphi}
\DeclareMathOperator{\Ker}{Ker}
\DeclareMathOperator{\End}{End}
\DeclareMathOperator{\rk}{rk}
\DeclareMathOperator{\cha}{char}
\DeclareMathOperator{\dep}{def}
\newcommand{\bom}{\boldmath}
\begin{document}

\author{O.\,G.\?Styrt}
\title{On matrix sets\\
invariant under conjugation\\
and taking linear combinations\\
of commuting elements}
\date{}
\newcommand{\udk}{512.643.1+512.643.721+512.643.8+512.643.875}
\newcommand{\msc}{15A20+15A27+47A05+47A08}

\maketitle

{\leftskip\parind\rightskip\parind

Subsets of a~matrix algebra over a~field that are invariant under conjugation and contain the linear span of each two of their commuting elements are
described. They obviously include the subsets of diagonalizable and nilpotent matrices. In the paper, the case of an algebraically closed field is considered.
The problem is easily reduced to description of subsets of diagonalizable matrices and subsets of nilpotent matrices with the given properties. So, among
diagonalizable matrices, there are four of such subsets. As for the nilpotent case, it is proved that the subset should be defined by the condition that the sizes
of all Jordan cells of the matrix belong to a~certain number set. An explicit criterion is obtained in terms of this set.

\smallskip

\textbf{Key words\:} matrix algebra, diagonalizable matrix, nilpotent matrix.\par}

\section{Introduction}\label{introd}

In researching abstract theory of Lie groups and algebras\ti for example, in \cite{VO,Hum}\ti different forms of \textit{Jordan decomposition} play quite important
role. So, semisimple Lie algebras over algebraically closed fields of characteristic zero involve two special types of elements\: \textit{semisimple} and
\textit{nilpotent} ones\ti and the uniquely defined \textit{additive Jordan decomposition} of an arbitrary element into the sum of two commuting elements of these
types. Each of the subsets of all semisimple and of all nilpotent elements is preserved under the inner automorphisms and contains the linear span of each two of its
commuting elements. The proof of this fact is based on elementary concepts of linear algebra and does not use the \textit{Lie} structure of the \textit{associative}
operator algebra (it is worth noting that, similarly, two given types of elements of semisimple Lie algebras are defined through the analogical properties of the
corresponding adjoint operators). At the same time, these two subsets fundamentally differ intersecting only by the zero element. It led to the problem of describing
all subsets with the same properties of invariance. In the paper, it is solved for the operator algebra (also involving two above-mentioned types of elements) that
gives perspectives for considering semisimple Lie algebras. Presumably, the case of a~simple Lie algebra of type~$A_r$ can be reduced to the result obtained by
concerning the (not needed in this paper) Lie structure of the operator algebra.

So, let $\F$ be a~field, $n$ a~positive integer, and $A$ the algebra $\End(\F^n)$ (naturally identified with $M_n(\F)$). The paper is devoted to describing subsets
$M\subs A$ such that
\begin{align}
&&0\in M&;\label{ze}\\
&\fa g\in A^*\quad& gMg^{-1}=M&;\label{conj}\\
&\fa X,Y\in M\quad& \br{[X,Y]=0}&\quad\Ra\quad\br{\ha{X,Y}\subs M}.\label{comm}
\end{align}
The family~$\Mc$ of all these sets is obviously closed under intersection. Besides, it includes the subset~$A_s$ of all diagonalizable operators and the
subset~$A_n$ of all nilpotent operators.

An arbitrary subset of the algebra~$A$ all of whose elements pairwise commute will be called \textit{commutative}.

\begin{note} The condition~\eqref{comm} is equivalent to the fact that the intersection of~$M$ with any commutative subspace in~$A$ is a~subspace. Also,
if it holds, then we have $\F M\subs M$ (it suffices to consider equal $X$ and~$Y$) and, thus, the condition~\eqref{ze} becomes equivalent to
non-emptiness of~$M$.
\end{note}

Further, let us suppose that $\F=\ol{\F}$ (consequently, $|\F|=\bes$). In the algebra~$A$, denote the subspace of all matrices with trace~$0$ by~$A_0$ and the semisimple and nilpotent
parts of an arbitrary element~$X$ by $X_s$ and~$X_n$ respectively.

\begin{lemma}\label{proj} If $M\in\Mc$, then, for each $X\in M$, we have $X_s,X_n\in M$.
\end{lemma}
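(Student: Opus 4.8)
The plan is to use the conjugation invariance \eqref{conj} to produce from $X$ the whole family $\{X_s+tX_n:t\in\F\}$, and then to extract $X_s$ and $X_n$ from that family by means of \eqref{comm}. Fix $X\in M$. Since $\F=\ol\F$, the generalized eigenspace decomposition $\F^n=\bigoplus_\la\Ker(X-\la\id)^n$ holds; fix a Jordan basis adapted to it, so that $X_s$ is represented by a diagonal matrix, $X_n$ by a matrix whose only nonzero entries lie on the superdiagonal inside the Jordan blocks, and $[X_s,X_n]=0$.

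For each nonzero $t\in\F$ let $g_t\in A^*$ be the invertible diagonal operator whose matrix in this basis restarts the sequence $1,t^{-1},t^{-2},\dots$ at the first basis vector of every Jordan block. Comparing entries of $g_tXg_t^{-1}$ with those of $X$ gives $g_tX_sg_t^{-1}=X_s$ and $g_tX_ng_t^{-1}=tX_n$, hence
\[g_tXg_t^{-1}=X_s+tX_n,\]
so $X_s+tX_n\in M$ for every $t\in\F\sm\{0\}$ by \eqref{conj}. (Coordinate-freely: $X_n$ preserves each $V_\la=\Ker(X-\la\id)^n$ and is nilpotent there, hence is conjugate inside $\GL(V_\la)$ to $tX_n|_{V_\la}$; assembling these conjugations over all $\la$ produces a $g_t\in A^*$ that commutes with $X_s$ and sends $X_n$ to $tX_n$.)

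All members of this family pairwise commute, since $[X_s+tX_n,X_s+sX_n]=0$ follows from $[X_s,X_n]=0$. As $\F$ is infinite, choose distinct nonzero $t,s\in\F$; by \eqref{comm} the subspace $\ha{X_s+tX_n,X_s+sX_n}$ lies in $M$, and it contains $(X_s+tX_n)-(X_s+sX_n)=(t-s)X_n$, whence $X_n\in M$ (using $\F M\subs M$, which holds by the Remark). Then $X_s+tX_n$ and $tX_n$ both belong to $M$ and commute, so \eqref{comm} gives $X_s=(X_s+tX_n)-tX_n\in M$ as well. The only step that really needs checking is the identity $g_tXg_t^{-1}=X_s+tX_n$\ti equivalently, that a nilpotent operator is conjugate to each of its nonzero scalar multiples by a conjugation fixing the semisimple part\ti and that is a routine Jordan-form computation; everything else is immediate from \eqref{ze}, \eqref{conj}, \eqref{comm} and the Remark.
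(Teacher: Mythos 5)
Your proof is correct and follows essentially the same route as the paper's: put $X$ in Jordan form, conjugate by a diagonal $g$ scaling geometrically along the diagonal so that $gXg^{-1}=X_s+tX_n\in M$, then recover $X_s$ and $X_n$ from the commutative span of two such elements via \eqref{comm}. The paper takes $t=1$ and one other nonzero $a\ne1$ (rather than two fresh scalars $t,s$), uses the single global sequence $g_{kk}=a^{-k}$ (restarting per block as you do also works), and observes directly that $\ha{X,\,X_s+aX_n}=\ha{X_s,X_n}$, so your final extraction of $X_s$ in a separate step is a slight, harmless redundancy.
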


\begin{proof} Assume that $X$ is a~Jordan matrix (it is true up to conjugation). Then $(X_s)_{ij}=0$ once $j\ne i$ and $(X_n)_{ij}=0$ once $j-i\ne1$.
Hence, for an arbitrary element $a\in\F^*\sm\{1\}$ and the diagonal matrix $g\in A^*$, $g_{kk}:=a^{-k}$, we have $gX_sg^{-1}=X_s$ and $gX_ng^{-1}=aX_n$
implying $Y:=X_s+aX_n=gXg^{-1}\in M$. The subspace $\ha{X,Y}=\ha{X_s,X_n}\subs A$ is commutative and, thus, lies in~$M$.
\end{proof}

Let $\Mc_s$ (resp.~$\Mc_n$) be the family of all $M\in\Mc$ contained in~$A_s$ (resp. in~$A_n$).

\begin{theorem} One has mutually inverse bijections
\equ{\begin{aligned}
\Mc\to\Mc_s\times\Mc_n,&\quad\quad M\to(M\cap A_s,M\cap A_n);\\
\Mc_s\times\Mc_n\to\Mc,&\quad\quad (M_s,M_n)\to\{X\in A\cln X_s\in M_s,\,X_n\in M_n\}.
\end{aligned}}
\end{theorem}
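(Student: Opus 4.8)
The plan is to verify the two maps are well-defined and mutually inverse. For the first map, given $M\in\Mc$ one must check that $M\cap A_s\in\Mc_s$ and $M\cap A_n\in\Mc_n$. Both are intersections of two members of $\Mc$ (namely $M$ with $A_s$, resp. $A_n$), and $\Mc$ is closed under intersection as noted after \eqref{comm}; since $A_s$ and $A_n$ themselves lie in $\Mc$, the intersections lie in $\Mc$, and by construction they are contained in $A_s$, resp. $A_n$. So the first map lands in $\Mc_s\times\Mc_n$.

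For the second map, write $\Phi(M_s,M_n):=\{X\in A\cln X_s\in M_s,\,X_n\in M_n\}$ and check $\Phi(M_s,M_n)\in\Mc$. Condition \eqref{ze} holds because $0_s=0_n=0\in M_s,M_n$. For \eqref{conj}: for $g\in A^*$ one has $(gXg^{-1})_s=gX_sg^{-1}$ and $(gXg^{-1})_n=gX_ng^{-1}$ (conjugation-invariance of the Jordan decomposition), and $M_s,M_n$ are conjugation-invariant, so $g\Phi(M_s,M_n)g^{-1}=\Phi(M_s,M_n)$. The key step is \eqref{comm}: suppose $X,Y\in\Phi(M_s,M_n)$ commute; I must show $\ha{X,Y}\subs\Phi(M_s,M_n)$, i.e. that every $Z=\la X+\mu Y$ satisfies $Z_s\in M_s$, $Z_n\in M_n$. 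Here is where I expect the main obstacle to lie: one needs that $[X,Y]=0$ forces all four of $X_s,X_n,Y_s,Y_n$ to pairwise commute. This is a standard fact — the semisimple and nilpotent parts of $X$ are polynomials in $X$ (over $\F=\ol\F$), hence commute with anything commuting with $X$, in particular with $Y$ and therefore with $Y_s,Y_n$ (again polynomials in $Y$). Once that is in hand, $Z_s=\la X_s+\mu Y_s$ and $Z_n=\la X_n+\mu Y_n$: indeed $Z=(\la X_s+\mu Y_s)+(\la X_n+\mu Y_n)$ is a decomposition of $Z$ into a sum of two commuting operators, the first diagonalizable (a linear combination of commuting diagonalizable operators) and the second nilpotent (a linear combination of commuting nilpotents), so by uniqueness of the additive Jordan decomposition these are exactly $Z_s$ and $Z_n$. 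Now $\la X_s+\mu Y_s\in\ha{X_s,Y_s}\subs M_s$ since $X_s,Y_s\in M_s$ commute and $M_s$ satisfies \eqref{comm}; likewise $\la X_n+\mu Y_n\in M_n$. Hence $Z\in\Phi(M_s,M_n)$, proving \eqref{comm}.

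It remains to check the two compositions are the identity. Starting from $M\in\Mc$: I claim $\Phi(M\cap A_s,\,M\cap A_n)=M$. The inclusion $\subs$: if $X_s\in M$ and $X_n\in M$, then since $X_s$ and $X_n$ commute and $M$ satisfies \eqref{comm}, $X=X_s+X_n\in\ha{X_s,X_n}\subs M$. The inclusion $\sups$: if $X\in M$, then by Lemma~\ref{proj} $X_s,X_n\in M$, and of course $X_s\in A_s$, $X_n\in A_n$, so $X_s\in M\cap A_s$ and $X_n\in M\cap A_n$, whence $X\in\Phi(M\cap A_s,M\cap A_n)$. Conversely, starting from $(M_s,M_n)\in\Mc_s\times\Mc_n$ and setting $M:=\Phi(M_s,M_n)$, I must show $M\cap A_s=M_s$ and $M\cap A_n=M_n$. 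For $M\cap A_s$: if $X\in M_s$ then $X$ is diagonalizable so $X_s=X\in M_s$, $X_n=0\in M_n$, giving $X\in M\cap A_s$; conversely if $X\in M\cap A_s$ then $X$ diagonalizable gives $X=X_s\in M_s$. The argument for $M\cap A_n=M_n$ is symmetric, using $X_s=0$, $X_n=X$ for nilpotent $X$ and $0\in M_s$. This establishes both composites are identities, so the maps are mutually inverse bijections.
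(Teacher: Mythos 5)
Your proof is correct and follows essentially the same route as the paper: the key ingredients in both are that commuting $X,Y$ force all of $X,X_s,X_n,Y,Y_s,Y_n$ to commute with linear Jordan decomposition $(aX+bY)_{s,n}=aX_{s,n}+bY_{s,n}$, together with Lemma~\ref{proj}. The paper states this compactly and leaves the well-definedness and composition checks implicit; you have simply spelled them out.
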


\begin{proof} In the algebra~$A$, the following holds\: if elements $X$ and~$Y$ commute, then the subset $\{X,X_{s,n},Y,Y_{s,n}\}$ is commutative, any
$a,b\in\F$ satisfying $(aX+bY)_{s,n}=aX_{s,n}+bY_{s,n}$. It remains to apply Lemma~\ref{proj}.
\end{proof}

Let $M$ be an arbitrary subset of the family~$\Mc_s$. Its intersection~$L$ with the commutative subspace $A_d\subs A$ of all diagonal matrices is
a~subspace, and, also, by~\eqref{conj},
\eqn{\label{diag}
M=\bc{gXg^{-1}\cln X\in L,\,g\in A^*}.}
Hence, the subspace $L\subs A_d$ is invariant under permutations of diagonal elements and, therefore, coincides with one of the subspaces
$0,\,\F E,\,A_d,\,A_d\cap A_0$. Applying~\eqref{diag} again, we obtain that $M$ is one of the subsets $\{0\},\,\F E,\,A_s,\,A_s\cap A_0$. Conversely,
each of these subsets obviously belongs to~$\Mc_s$.

Further, let us describe subsets of the family~$\Mc_n$.

The conjugacy class of each nilpotent matrix is uniquely defined by the non-ordered set of the cell sizes of its Jordan form (with considering
multiplicities). Therefore, any subset of the family~$\Mc_n$ is defined among~$A_n$ by the condition that this set belongs to a~certain family of sets.
Moreover, we will further show that each subset of the family~$\Mc_n$ is the set $M(Q)$ ($Q\subs\{2\sco n\}$) of all nilpotent matrices, in whose Jordan
forms, all cells have sizes from $Q\cup\{1\}$. Thus, the problem of describing the family~$\Mc_n$ is reduced to that of searching all subsets
$Q\subs\{2\sco n\}$ such that the set $M:=M(Q)$ satisfies~\eqref{comm} (the conditions \eqref{ze} and~\eqref{conj} obviously hold).

Once $Q=\es$, we have $M(Q)=\{0\}\in\Mc_n$. As for the case of non-empty~$Q$, the following criterion of the inclusion $M(Q)\in\Mc_n$ will be obtained in
the article.

\begin{theorem}\label{main} Assume that $Q\ne\es$. Then $M(Q)\in\Mc_n$ if and only if there exists a~number
$m_0\in\BC{2\sco\bs{\frac{n}{2}}+1}$ equal either to $\bs{\frac{n}{2}}+1$ or to $(\cha\F)^k$ \ter{$k\in\N$} and such that the set~$Q$ contains all numbers
$2\sco m_0$ all its rest elements being at most $2m_0$ and at least $n-m_0+2$, i.\,e.
\begin{gather}
2\sco m_0\in Q;\label{fir}\\
\fa m\in Q\quad(m_0+1\le m\le n)\quad\Ra\quad(n-m_0+2\le m\le2m_0).\label{oth}
\end{gather}
\end{theorem}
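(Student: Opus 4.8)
The plan is to reduce condition~\eqref{comm} for $M=M(Q)$ to a statement about one sum of two commuting nilpotent matrices, and then prove the two implications of Theorem~\ref{main} separately; in both directions the behaviour of the binomial coefficients $\binom{m_0}{i}$ modulo $p=\cha\F$ is the engine. For the reduction\: since $M(Q)$ is closed under multiplication by nonzero scalars (this leaves the Jordan type unchanged) and contains~$0$, condition~\eqref{comm} for $M(Q)$ is equivalent to saying that whenever nilpotent $X,Y$ commute and each has all Jordan cell sizes in $Q\cup\{1\}$, so does $X+Y$; indeed $\ha{X,Y}=\F X+\F Y$, and for $b\ne0$ we have $aX+bY=b\hr{Y+b^{-1}aX}$ with $b^{-1}aX$ nilpotent, commuting with~$Y$, and of the Jordan type of~$X$ (or zero). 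Throughout I would use~\eqref{conj} to put one operator in Jordan form, recalling that $Z^{k}=0$ says exactly that all cells of~$Z$ have size $\le k$.

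\emph{Sufficiency.} Take $m_0$ as in the statement and commuting nilpotents $X,Y$ with cells in $Q\cup\{1\}$. The key identity, valid because $XY=YX$, is
\equ{(X+Y)^{m_0}=\sum\limits_{i=0}^{m_0}\binom{m_0}{i}X^{i}Y^{m_0-i}.}
When $Q=\{2\sco m_0\}$ this already settles it\: if $m_0=p^{k}$ then $\binom{m_0}{i}\equiv0\pmod p$ for $0<i<m_0$, so $(X+Y)^{m_0}=0$; and if $m_0=\bs{\frac n2}+1$ then, I claim, a cell of size $\ge m_0+1$ cannot be produced on~$\F^{n}$ from commuting nilpotents all of whose cells are $\le m_0$ — I would prove this by induction on~$n$, decomposing $\F^{n}$ into indecomposable $\F[X,Y]$-modules and estimating the ranks of powers. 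For a general~$Q$ there are in addition ``large'' cells, of sizes $>m_0$ but $\le 2m_0$; here I would first use that $X$ preserves the image of each power $Y^{\,j}$ to get $X^{m_0}Y^{m_0}=0$ and hence $(X+Y)^{2m_0}=0$, then show, again by a rank/induction argument, that any cell of $X+Y$ of size $>m_0$ is already a cell size of $X$ or of~$Y$, so it lies in~$Q$ and automatically obeys~\eqref{oth}.

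\emph{Necessity.} Assume $M(Q)\in\Mc_n$ and $Q\ne\es$, and let $m_0$ be the smaller of $\bs{\frac n2}+1$ and $m_1-1$, where $m_1\ge2$ is the least integer not in~$Q$; then $\{2\sco m_0\}\subs Q$ holds by construction, and it remains to verify the alternative for~$m_0$ and the bounds~\eqref{oth}. The basic tool is a device for producing powers\: if $m\in Q$ and $Z=J_m\oplus0_{n-m}$, then $Z$ and $Z+Z^{j}=Z\hr{I+Z^{j-1}}$ are commuting elements of $M(Q)$ both conjugate to~$Z$ (as $I+Z^{j-1}$ is invertible), so $Z^{j}\in\ha{Z,Z+Z^{j}}\subs M(Q)$, whence every cell size of $J_m^{\,j}$, namely $\bse{m/j}$ and $\bs{m/j}$, lies in $Q\cup\{1\}$; equivalently, $M(Q)$ meets the commutative subspace $\F[Z]\cap A_n$ in a subspace, forcing the set of admissible exponents~$j$ to be an up-set. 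Taking $j=m-1$ gives $2\in Q$; these relations together with their direct-sum analogues ($Z=(J_a\oplus J_b)\oplus0$, its powers, and intertwiner perturbations) yield the bounds in~\eqref{oth}. To pin down $m_0$\: if $m_0<\bs{\frac n2}+1$ then $m_1=m_0+1\notin Q\cup\{1\}$, and I would use $X=J_a\otimes I_b$, $Y=I_a\otimes J_b$ on $\F^{a}\otimes\F^{b}$ — that is, multiplication by $x$ and by~$y$ on $\F[x,y]/(x^{a},y^{b})$ — which commute, have cells of sizes $a$ and~$b$, and whose sum (multiplication by $x+y$) has a cell of size $a+b-1$ precisely when $\binom{a+b-2}{a-1}\not\equiv0\pmod p$. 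Choosing $a+b=m_0+2$ and, since $m_0\ne p^{k}$, via Kummer's theorem an~$a$ with $\binom{m_0}{a-1}\not\equiv0$, produces a forbidden cell of size $m_0+1$ in $X+Y$, contradicting~\eqref{comm} once the module fits into~$\F^{n}$; this holds throughout the regime $m_0<\bs{\frac n2}+1$ after economizing the construction near the boundary (see below), so otherwise $m_0=\bs{\frac n2}+1$. Hence $m_0$ equals $\bs{\frac n2}+1$ or $p^{k}$.

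\emph{Main obstacle.} I expect the real work to be twofold\: on the sufficiency side, the inductive rank argument bounding the Jordan type of a sum of commuting nilpotents on~$\F^{n}$ in terms of $n$ and the types of the summands, so that no forbidden cell can appear; and on the necessity side, the characteristic-$p$ bookkeeping — deciding, through binomial residues, for which $a,b$ the bad cell sizes are actually realised, and, at the boundary values of $m$ and~$n$, replacing the tensor modules above by more economical ones (e.g.\ a tensor block glued to smaller cells) so that the constructions live in dimension exactly~$n$.
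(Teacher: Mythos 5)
Your high-level strategy (reduce~\eqref{comm} to closure of cell types under sums, use binomial coefficients modulo~$p$ for the prime-power alternative, use a rank/induction bound for the $\bs{\frac n2}+1$ alternative, and use power closure $Z\mapsto Z^j$ for the lower bounds on~$Q$) does track the paper, and the power-closure device $Z,\,Z(I+Z^{j-1})\in M(Q)$ is exactly the paper's Lemma~\ref{pow} and Corollaries~\ref{div}, \ref{q2}. But the central construction you propose for the necessity direction does not fit in dimension~$n$, and this is not a boundary nuisance but the crux of the argument.

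Concretely: you take $X=J_a\otimes I_b$, $Y=I_a\otimes J_b$ on $\F^a\otimes\F^b$ with $a+b=m_0+2$ and $\binom{m_0}{a-1}\not\equiv0\pmod p$. This module has dimension $ab$, and when $m_0$ is not a prime power Kummer's theorem does \emph{not} let you take $a$ small. For instance, with $p=2$, $m_0=6$, $n=12$ (so $m_0\le\bs{\frac n2}$): the nonzero $\binom{6}{a-1}\bmod 2$ with $0<a-1<6$ force $a\in\{3,5\}$, giving $ab=15>n$. Your construction simply does not exist inside $\F^{12}$, and the ``economizing near the boundary'' you flag is in fact the whole difficulty. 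The paper's fix is a genuinely different module: the $2m\times2m$ block matrix $Z_{a,b}=\rbmat{aJ_{0,m}&E\\0&bJ_{0,m}}$ commuting with $Z=\rbmat{J_{0,m}&0\\0&J_{0,m}}$, whose Jordan type flips between $\{m,m\}$ and $\{m+1,m-1\}$ according to whether $S_m(a,b)=\frac{a^m-b^m}{a-b}$ vanishes, and which \emph{always} fits in dimension $2m\le n$. Since $|\F|=\bes$ and the bad locus of $(a,b)$ is finite, a generic scalar $t$ makes $tZ+Z_{1,\ep_0}$ land in the $\{m+1,m-1\}$ orbit, forcing $m\pm1\in Q_M\cup\{1\}$ unless $\F$ has no non-trivial $m$-th roots of unity, i.e.\ $m$ is a power of $\cha\F$ (Theorem~\ref{pm1}). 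Relatedly, your derivation of the inequalities~\eqref{oth} (``direct-sum analogues and intertwiner perturbations'') is not carried out; the paper needs a second non-obvious construction for this, a quotient $(W/U)$ of $V_1\oplus V_2$ gluing a $J_{0,m_1}$-block and a $J_{0,m_2}$-block so as to produce commuting $X_0,X_1$ with $X_0-X_1$ of Jordan type $\{m+2\}$ (Theorem~\ref{p2}); the bound $m_1\ge n-m_0+2$ then falls out from $m+m_1>n$. On the sufficiency side your sketch is closer to the paper's route (the prime-power case is literally the Frobenius argument of Lemma~\ref{mdeg}, and the $m_0=\bs{\frac n2}+1$ case is the unnamed induction lemma before Corollary~\ref{mhal}), but the step ``$X^{m_0}Y^{m_0}=0$, hence any cell of $X+Y$ of size $>m_0$ is a cell size of $X$ or of~$Y$'' is not established and is not obviously true as stated; the paper's Lemma~\ref{bin} and Theorem~\ref{comh}, which carefully peel off the unique large Jordan block and approximate $Y$ by a polynomial in~$X$ plus a remainder mapping $W$ into $U$, are doing real work there that your sketch replaces with an assertion.
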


In the case $m_0=\bs{\frac{n}{2}}+1$ the condition~\eqref{oth} automatically holds\: $\frac{n}{2}<\bs{\frac{n}{2}}+1=m_0$, $n<2m_0$, $n\le2m_0-1$,
$m_0+1=(2m_0-1)-m_0+2\ge n-m_0+2$. Therefore, the main result immediately implies the following theorem.

\begin{theorem}\label{submain} The condition $2\sco\bs{\frac{n}{2}}+1\in Q$ is sufficient for the inclusion $M(Q)\bw\in\Mc_n$ and, once $\cha\F=0$ and
$Q\ne\es$, equivalent to it.
\end{theorem}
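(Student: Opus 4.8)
The plan is simply to read off Theorem~\ref{submain} from the criterion of Theorem~\ref{main}: the hypothesis $2\sco\bs{\frac{n}{2}}+1\in Q$ is exactly condition~\eqref{fir} for the particular choice $m_0=\bs{\frac{n}{2}}+1$, and in characteristic zero this is the only value of $m_0$ that Theorem~\ref{main} permits, so both implications become essentially tautological once a handful of elementary inequalities are checked.

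For the sufficiency I would argue as follows. Suppose $2\sco\bs{\frac{n}{2}}+1\in Q$. If $n=1$ this hypothesis is empty and forces $Q\subs\{2\sco n\}=\es$, whence $M(Q)=\{0\}\in\Mc_n$ directly. Otherwise $n\ge2$, so $\bs{\frac{n}{2}}+1\ge2$, hence $Q\ne\es$, and I invoke Theorem~\ref{main} with $m_0:=\bs{\frac{n}{2}}+1\in\BC{2\sco\bs{\frac{n}{2}}+1}$, which is admissible as the first of the two allowed values. Then~\eqref{fir} holds by assumption and~\eqref{oth} holds automatically: this is precisely the chain $\frac{n}{2}<m_0$, $n\le2m_0-1$, $n-m_0+2\le m_0+1$ already recorded just before the statement. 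Hence $M(Q)\in\Mc_n$.

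For the converse, assume $\cha\F=0$, $Q\ne\es$ and $M(Q)\in\Mc_n$. Theorem~\ref{main} produces $m_0\in\BC{2\sco\bs{\frac{n}{2}}+1}$ equal either to $\bs{\frac{n}{2}}+1$ or to $(\cha\F)^k$ for some $k\in\N$. In characteristic zero no power of $\cha\F=0$ lies in $\BC{2\sco\bs{\frac{n}{2}}+1}$, so the second alternative is void and $m_0=\bs{\frac{n}{2}}+1$; then condition~\eqref{fir} reads exactly $2\sco\bs{\frac{n}{2}}+1\in Q$, which is what we wanted.

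There is no genuine difficulty here, since all the substance is contained in Theorem~\ref{main}. The only points demanding (trivial) attention are verifying that $m_0=\bs{\frac{n}{2}}+1$ is a legitimate choice and that~\eqref{oth} is vacuous for it — both being the displayed inequalities — together with the convention that the ``$(\cha\F)^k$'' alternative does not occur when $\cha\F=0$; the boundary case $n=1$ has to be peeled off separately only because Theorem~\ref{main} assumes $Q\ne\es$, but there $M(Q)=\{0\}$ trivially.
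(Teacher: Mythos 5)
Your proof is correct and follows essentially the same route as the paper, which derives Theorem~\ref{submain} directly from Theorem~\ref{main} by observing that for $m_0=\bs{\frac{n}{2}}+1$ condition~\eqref{oth} becomes vacuous, and that in characteristic zero the alternative $m_0=(\cha\F)^k$ is unavailable. The only (harmless) addition is your explicit treatment of $n=1$, which the paper leaves implicit since there $Q$ must be empty.
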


\section{Notations and auxiliary facts}

Let $V$ be an $n$\dh dimensional space, $A$ the algebra $\End(V)$, and $A_n\subs A$ the subspace of all nilpotent operators. For $X\in A$, set
$\dep(X):=\dim(\Ker X)$.

Consider an arbitrary operator $X\in A_n$ and its Jordan form. Denote by $h(X)$ the maximal size of a~cell. It is known that, for each $p\in\N$, the number
of cells of sizes $\ge p$ is equal to $\dep(X^p)-\dep(X^{p-1})$ \ter{in part, the total number of cells equals $\dep(X)$}. Besides,
$h(X)=\min\{k\bw\in\N\cln X^k\bw=0\}\bw\in\{1\sco n\}$.

The algebra $\End(\F^m)$ ($m\in\N$) will be naturally identified with $M_m(\F)$. Denote by~$J_{\la,m}$ the Jordan cell of size~$m$ with the eigenvalue $\la\in\F$.

For an arbitrary $f\in\br{\F[t]}\wo\{0\}$, set $k(f):=\max\{k\ge0\cln f\dv t^k\}$.

\begin{lemma}\label{cent} The centralizer of the matrix~$J_{0,m}$ is the subalgebra $\F[J_{0,m}]$.
\end{lemma}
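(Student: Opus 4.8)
The statement to prove is Lemma~\ref{cent}: the centralizer of $J_{0,m}$ in $M_m(\F)$ equals $\F[J_{0,m}]$.

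\medskip

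The plan is to prove the two inclusions separately, the nontrivial one being that every matrix commuting with $N:=J_{0,m}$ is a polynomial in~$N$. First I would note the easy inclusion: $\F[N]$ lies in the centralizer since $N$ commutes with every power of itself and with scalars. For the reverse inclusion, I would work with the standard basis $e_1\sco e_m$ of $\F^m$ on which $N$ acts by $Ne_1=0$ and $Ne_i=e_{i-1}$ for $i\ge2$ (equivalently $N^j e_i = e_{i-j}$ when $i>j$ and $=0$ otherwise). Suppose $B$ commutes with $N$. The key observation is that $e_m$ is a cyclic vector for $N$: the vectors $e_m, Ne_m=e_{m-1},\ldots,N^{m-1}e_m=e_1$ form a basis. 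Write $Be_m = \sum_{k=0}^{m-1} c_k e_{m-k}$ for scalars $c_k\in\F$, and set $f(t):=\sum_{k=0}^{m-1}c_k t^k$, so that $Be_m = f(N)e_m$. Then for every $i$ we have $e_i = N^{m-i}e_m$, hence $Be_i = B N^{m-i} e_m = N^{m-i} B e_m = N^{m-i} f(N) e_m = f(N) N^{m-i} e_m = f(N) e_i$, using that $B$ commutes with $N$ and therefore with every power of $N$. Thus $B$ and $f(N)$ agree on a basis, so $B=f(N)\in\F[N]$, which proves the inclusion and hence equality.

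\medskip

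I do not expect any real obstacle here; the argument is the textbook cyclic-vector computation. The one point that deserves a line of care is verifying that $e_m$ is genuinely cyclic, i.e.\ that $\{N^j e_m\}_{j=0}^{m-1}$ is a basis\ti this is immediate from $N^j e_m = e_{m-j}$. A minor alternative, if one wants to avoid picking a cyclic vector explicitly, is a dimension count: the centralizer of $N$ has dimension at least $m$ (for a single nilpotent Jordan block the centralizer consists of the upper-triangular Toeplitz matrices, which form an $m$\dh dimensional space), while $\F[N]$ also has dimension exactly $m$ because $1, N\sco N^{m-1}$ are linearly independent (as $N^{m-1}\ne0$ but $N^m=0$) and $N^m=0$; since $\F[N]$ is contained in the centralizer, the two coincide. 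I would present the cyclic-vector proof as the main one since it is the most direct and self-contained.
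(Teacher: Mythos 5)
Your proof is correct, and it is the standard cyclic-vector argument for the centralizer of a single nilpotent Jordan block. Note that the paper itself gives no proof of Lemma~\ref{cent}\ti it states \emph{``We omit the proof since it is well-known''}\ti so there is nothing in the source to compare against; your write-up (and the dimension-count alternative you sketch) is exactly the textbook treatment the authors had in mind.
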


We omit the proof since it is well-known.

\begin{lemma}\label{def} For any $f\in\br{\F[t]}\wo\{0\}$, we have $\dep\br{f(J_{0,m})}=\min\bc{m;k(f)}$.
\end{lemma}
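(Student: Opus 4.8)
The plan is to factor out the largest power of $t$ dividing $f$ and reduce to the monomial case. Put $k:=k(f)$ and write $f(t)=t^k g(t)$ with $g\in\br{\F[t]}\wo\{0\}$ and $g(0)\ne0$. Since any two polynomials in the single matrix $J_{0,m}$ commute, $f(J_{0,m})=J_{0,m}^k\cdot g(J_{0,m})$. The operator $g(J_{0,m})$ is invertible\: it is upper triangular with every diagonal entry equal to $g(0)\ne0$ (equivalently, $g(J_{0,m})-g(0)E$, being a polynomial without constant term in the nilpotent operator $J_{0,m}$, is itself nilpotent). Multiplying by an invertible operator does not affect the kernel, so $\Ker\br{f(J_{0,m})}=\Ker\br{J_{0,m}^k}$ and hence $\dep\br{f(J_{0,m})}=\dep\br{J_{0,m}^k}$.

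It then remains to verify that $\dep\br{J_{0,m}^k}=\min\bc{m;k}$. If $k\ge m$, then $J_{0,m}^k=0$, the kernel is the whole space, and $\dep\br{J_{0,m}^k}=m$. If $k<m$ (in particular $k=0$, where $J_{0,m}^0=E$ and $\dep=0$), then writing the cell in the standard basis $e_1\sco e_m$ with $J_{0,m}e_j=e_{j-1}$ and $e_0:=0$ gives $J_{0,m}^k e_j=e_{j-k}$, so $J_{0,m}^k$ annihilates precisely $e_1\sco e_k$ and carries $e_{k+1}\sco e_m$ to the linearly independent vectors $e_1\sco e_{m-k}$; thus $\dep\br{J_{0,m}^k}=k$. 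In either case $\dep\br{J_{0,m}^k}=\min\bc{m;k}=\min\bc{m;k(f)}$, as claimed. (Alternatively, since $J_{0,m}$ has a single Jordan cell, of size~$m$, the formula recalled above gives $\dep\br{J_{0,m}^p}-\dep\br{J_{0,m}^{p-1}}=1$ for $1\le p\le m$ and $=0$ for $p>m$; summing these from $p=1$ to~$k$ and using $\dep(E)=0$ yields the same value $\min\bc{m;k}$.)

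The argument has essentially no obstacle\: the only step meriting a word of justification is the invertibility of $g(J_{0,m})$, and that is immediate from $g(0)\ne0$ together with the nilpotence of $J_{0,m}$.
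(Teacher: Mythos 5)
Your proof is correct but takes a different route from the paper's. You factor $f=t^k g$ with $g(0)\ne0$, observe that $g(J_{0,m})$ is invertible (upper triangular with constant nonzero diagonal $g(0)$), and thereby reduce the problem to computing $\dep\br{J_{0,m}^k}$, which you then evaluate on the standard basis. The paper instead inspects the entries of $X:=f(J_{0,m})$ directly: writing $f=\sums{i\ge0}c_it^i$, one has $x_{ij}=c_{j-i}$, so every entry with $j-i<k$ vanishes while every entry with $j-i=k$ equals $c_k\ne0$; from this \emph{staircase} structure the rank is read off as $\max\{0;m-k\}$ and the defect as $\min\{m;k\}$. Your version isolates the algebraic mechanism (units in $\F[J_{0,m}]$ do not change kernels) and so transfers more readily to other settings, while the paper's version is a one\dh step entry computation that also pins down \emph{where} the leading nonzero superdiagonal sits, a fact it reuses implicitly in the neighboring Lemma~\ref{int}. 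Both are sound and of comparable length; the invertibility of $g(J_{0,m})$, which you correctly flagged as the only point needing a word, is indeed immediate from $g(0)\ne0$ and nilpotence of $J_{0,m}$.
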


\begin{proof} Set $X:=f(J_{0,m})$ and $k:=k(f)$. Then,
\begin{gather*}
\fa i,j=1\sco m\\
\begin{aligned}
(j-i<k)\quad&\Ra\quad(x_{ij}=0);\\
(j-i=k)\quad&\Ra\quad(x_{ij}\ne0).
\end{aligned}
\end{gather*}
Hence, $\rk X = \max\{0;m-k\}$, $\dep(X)=m-(\rk X)=\min\{m;k\}$.
\end{proof}

\begin{lemma}\label{int} If $f\in\br{\F[t]}\wo\{0\}$, $k:=k(f)\in(0;m]$, $q:=\bs{\frac{m}{k}}$, and $r:=m-kq$, then the matrix $f(J_{0,m})$ is nilpotent, and its
Jordan form contains exactly $k$ cells, $r$ of them having size $q+1$ and all the rest\ti size~$q$.
\end{lemma}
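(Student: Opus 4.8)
The plan is to determine the Jordan type of $X:=f(J_{0,m})$ by computing $\dep(X^p)$ for every $p\in\N$ and then reading off the cell sizes from the standard formula recalled above. First I would observe that $k=k(f)\ge1$ forces $f(0)=0$, so writing $f=t^k g$ with $g(0)\ne0$ we get $X=J_{0,m}^k\,g(J_{0,m})$; here $g(J_{0,m})=g(0)\cdot E+(\text{nilpotent})$ is invertible and commutes with $J_{0,m}$, hence $X^p=J_{0,m}^{kp}\,g(J_{0,m})^p$, which vanishes as soon as $kp\ge m$. In particular $X$ is nilpotent, so the cell-counting formula applies to it.

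Next I would note that for each $p\in\N$ the polynomial $f^p$ is nonzero (as $\F[t]$ is an integral domain) and $k(f^p)=p\cdot k(f)=pk$, so Lemma~\ref{def} applied to $X^p=(f^p)(J_{0,m})$ gives $\dep(X^p)=\min\{m;\,pk\}$. By the formula stated in the text, the number of Jordan cells of $X$ of size $\ge p$ equals $\dep(X^p)-\dep(X^{p-1})=\min\{m;\,pk\}-\min\{m;\,(p-1)k\}$.

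What remains is a routine computation with $m=kq+r$, $0\le r<k$. For $1\le p\le q$ the difference is $pk-(p-1)k=k$; for $p=q+1$ it is $m-qk=r$ (using $r<k$, so $(q+1)k>m\ge qk$); and for $p\ge q+2$ it is $m-m=0$. Thus $X$ has exactly $k$ cells of size $\ge q$, exactly $r$ of which have size $\ge q+1$ and none has size $>q+1$; equivalently $r$ cells of size $q+1$ and $k-r$ cells of size $q$, for a total of $k$ cells, which is the claim. (The case $r=0$ degenerates correctly to $k$ cells all of size $q$.)

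I do not expect a genuine obstacle: the only points needing care are the short reduction establishing nilpotence of $X$ and the additivity $k(f^p)=p\,k(f)$, both resting on $\F[t]$ being a domain, together with keeping the boundary case $r=0$ in mind when stating the conclusion.
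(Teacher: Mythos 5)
Your proposal is correct and follows essentially the same route as the paper's own proof: both establish nilpotence from $k(f)\ge1$ (the paper writes $f\dv t\Rightarrow f^m\dv t^m\Rightarrow X^m=0$, a slightly shorter route than your factorization $f=t^kg$ with $g(J_{0,m})$ invertible, but equivalent in substance), then compute $\dep(X^p)=\min\{m;pk\}$ from Lemma~\ref{def} together with $k(f^p)=pk$, and finally read off the cell counts from the differences $\dep(X^p)-\dep(X^{p-1})$ in the three ranges $p\le q$, $p=q+1$, $p\ge q+2$. No gaps.
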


\begin{proof} Set $X:=f(J_{0,m})$. According to condition,
\equ{\begin{aligned}
k>0&&\quad&\Ra\quad\quad f\dv t,\ \ f^m\dv t^m\quad\Ra\quad X^m=0;\\
k\le m&&\quad&\Ra\quad\quad q\ge1;\\
q\le\frac{m}{k}< q+1 &&\quad&\Ra\quad\quad kq\le m < k(q+1),\ \ 0\le r<k.
\end{aligned}}
By Lemma~\ref{def},
\equ{
\fa p\ge0\quad\quad\dep(X^p)=\min\bc{m;k(f^p)}=\min\{m;kp\}.}

Consider the Jordan form of the matrix~$X$. For any $p\in\N$, the number~$k_p$ of cells of size $\ge p$ equals $\dep(X^p)-\dep(X^{p-1})=\min\{m;kp\}-\min\bc{m;k(p-1)}$.
In part,
\equ{\begin{aligned}
\fa p\in[1;q]\quad\quad&k_p=\min\{m;kp\}-\min\bc{m;k(p-1)}=kp-k(p-1)=k;\\
&k_{q+1}=\min\bc{m;k(q+1)}-\min\{m;kq\}=m-kq=r;\\
&k_{q+2}=\min\bc{m;k(q+2)}-\min\bc{m;k(q+1)}=m-m=0.
\end{aligned}}
Note also that the total number of cells equals $k_1=k$.
\end{proof}

\begin{imp}\label{inp} In the conditions of the previous lemma, the set of sizes of all cells \ter{without considering multiplicities} of the Jordan form of the matrix
$f(J_{0,m})$ is $\BC{\bs{\frac{m}{k}};\bse{\frac{m}{k}}}$.
\end{imp}

\begin{proof} In the notations of Lemma~\ref{int}, the numbers of cells of sizes $q$ and $q+1$ equal respectively $k-r$ and~$r$, cells of other sizes not existing.
Besides, $0\le r<k$, $k-r>0$, $\bse{\frac{m}{k}}\in\{q;q+1\}$, and, also, $\Br{\bse{\frac{m}{k}}=q}\Lra\br{\frac{m}{k}=q}\Lra(r=0)$.
\end{proof}

\begin{imp}\label{poly} If $f\in\br{\F[t]}\wo\{0\}$ and $k(f)=1$, then the matrix $f(J_{0,m})$ is similar to~$J_{0,m}$.
\end{imp}

\begin{lemma}\label{bin} In the space~$V$, consider commuting nilpotent operators $X$ and~$Y$, and, also, $X$\dh invariant subspaces $W$ and~$U$, such that $V=W\oplus U$
and $YW\subs U$. Besides, assume that, in the Jordan form of the operator~$X|_W$, all cells have sizes greater than $k+1$ where $k:=h(X|_U)$. Then,
\equ{
\fa p,q\ge0\quad(p+q>\dim U)\quad\Ra\quad(Y^qX^pU=0).}
\end{lemma}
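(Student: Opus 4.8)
The plan is to exploit the decomposition $V = W \oplus U$ together with the fact that $Y$ moves $W$ into $U$ and that $X$ is much ``taller'' on $W$ than on $U$. Write $k := h(X|_U)$, so $X^k U = 0$ by definition of $h$, and every Jordan cell of $X|_W$ has size $\ge k+2$. Fix $p, q \ge 0$ with $p + q > \dim U$; I want to show $Y^q X^p U = 0$. Since $U$ is not $X$-invariant but only $W$ is, the natural move is to chase a vector $u \in U$ through $X$ first (staying inside $V$, with components in both $W$ and $U$) and then apply $Y^q$, using $YW \subs U$ to fold the $W$-component back into $U$, where the low height of $X|_U$ finishes the argument.

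**Key decomposition.** For $u \in U$ write $X^p u = w_p + u_p$ with $w_p \in W$, $u_p \in U$. First I would handle the component $u_p$: I claim $Y^q u_p = 0$ when $p + q > \dim U$ already for a cruder reason, but this needs care because $u_p$ need not be $X^p$ of anything in $U$. A cleaner route: consider the operator $Z := \pi_U \circ X|_U$ where $\pi_U\colon V \to U$ is the projection along $W$. Then $Z$ is a nilpotent operator on $U$ (it is a "compression" of the nilpotent $X$), so $Z^{\dim U} = 0$, and one checks inductively that $X^p u \equiv Z^p u \pmod{W + \text{(terms already killed)}}$ — more precisely, that the $U$-component $u_p$ of $X^p u$ equals $Z^p u$ plus a sum of terms each lying in $X^{\ge 1}W$-images, hence in $W$. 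Actually the honest statement is $u_p = Z^p u$ exactly: projecting the identity $X^{p+1}u = X(w_p) + X(u_p)$ onto $U$ and using $XW \subs W$ gives $u_{p+1} = \pi_U X u_p = Z u_p$, so $u_p = Z^p u$ by induction. Consequently the $W$-component satisfies $w_{p+1} = \pi_W X w_p + \pi_W X u_p = X w_p + \pi_W X u_p$ (using $XW\subs W$), which is the recursion I will need for the other half.

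**Handling the $W$-component.** The subtle part is bounding $Y^q w_p$. Here I use the height hypothesis on $X|_W$: the "source" of $w_p$ in the recursion above is fed at each step by $\pi_W X u_{p'} = \pi_W X Z^{p'} u$, and this is nonzero only while $Z^{p'} u \ne 0$, i.e.\ for $p' < \dim U$ (since $Z^{\dim U}=0$). So $w_p$ is built from at most $\min\{p,\dim U\}$ "injections," each then pushed forward by powers of $X|_W$. Because every Jordan cell of $X|_W$ has size $> k+1 = h(X|_U)+1$, applying $Y$ once (which lands in $U$, where $X$ has height $k$) and then $X$ cannot traverse a whole cell of $W$: the key computation is that $Y X^j w = 0$ whenever $j$ is large enough relative to where $w$ sits, controlled by $h(X|_U)$. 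Assembling: $Y^q X^p U = 0$ will follow by splitting $X^p U \subs W' + U'$ appropriately and applying $Y^q$ to each summand, using $Y^q U' \subs$ (iterated applications landing back in $U$ each time, so killed once the exponent exceeds $\dim U$) and using the cell-size bound on $W$ to kill the $W'$-part once $p + q$ exceeds $\dim U$.

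**Main obstacle.** The hard part will be the bookkeeping that converts "the tall cells of $X|_W$ cannot be traversed by an $XY$-step" into the clean exponent bound $p + q > \dim U$; in particular, making precise how many times the $W$-component is "re-seeded" by the $U$-dynamics ($\le \dim U$ times, since $Z^{\dim U} = 0$) and then tracking that each re-seeding contributes at most $\dim U$ to the safe exponent, so that the combined budget is exactly $\dim U$. I expect to prove the lemma by a double induction: outer induction on $\dim U$ (or on $k = h(X|_U)$), reducing to a smaller $U$ by peeling off $\Ker(X|_U)$ or $\Img(Z)$, with the cell-size gap guaranteeing the hypothesis is inherited; the inner step is the elementary fact for a single Jordan cell that $X^a Y X^b$ acting across a cell of size $> k+1$ vanishes once $a + b$ is large, which is where Corollary~\ref{poly} / Lemma~\ref{def}-style estimates on powers of $J_{0,m}$ enter.
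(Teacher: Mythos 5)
There is a genuine gap, and it starts with a misreading of the hypothesis. You write ``Since $U$ is not $X$-invariant but only $W$ is,'' but the lemma states that \emph{both} $W$ and $U$ are $X$-invariant subspaces. Consequently $X^p u\in U$ for every $u\in U$ and $p\ge0$: in your notation $w_p=0$ and $u_p=X^p u$ for all $p$, and your operator $Z=\pi_U\circ X|_U$ is just $X|_U$ itself (so in fact $Z^k=0$ with $k=h(X|_U)$, which is sharper than $Z^{\dim U}=0$). The entire framework of chasing a $W$-component of $X^p u$ through the recursion $w_{p+1}=Xw_p+\pi_W X u_p$ is therefore vacuous, and the ``subtle part'' you isolate, bounding $Y^q w_p$, is $Y^q 0=0$. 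What remains after correcting the misreading is exactly the original problem: show $Y^q(X^pU)=0$ for a subspace $X^pU\subs U$. The true source of difficulty is not that $X$ mixes $W$ and $U$ (it does not), but that $Y$ does: one is told only $YW\subs U$, while $YU$ can land anywhere in $V=W\oplus U$. Your sketch glosses over this with the unjustified parenthetical that iterated applications of $Y$ to a $U$-vector ``land back in $U$ each time,'' which simply does not follow from $YW\subs U$.

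The paper's argument is structured around this asymmetry. It introduces, for $i=0\sco k$, the subspaces $V_i:=\Ker(X^i)+X^{k-i+1}V$; these are $Y$-invariant because $Y$ commutes with $X$, which completely sidesteps the issue that $Y$ does not respect the splitting $V=W\oplus U$. One then shows $V_0=X^{k+1}W$ with $YV_0=0$, that $U\subs V_k$, that $XV_i\subs V_{i-1}$, and, using the cell-size hypothesis in the form $X^{k-i+1}W\sups\Ker(X^{i+1}|_W)$, that $Y^{n_i}V_i\subs V_{i-1}$ where $n_i:=\dim\Ker(X^i|_U)-\dim\Ker(X^{i-1}|_U)$ and $\sum_{i=1}^k n_i=\dim U$. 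Pushing $X^pU\subs V_{k-p}$ down the filtration by these controlled powers of $Y$ yields the stated bound $p+q>\dim U$. If you want to salvage your approach, the lesson is that the inductive structure should be organized around $Y$-invariant subspaces built from $\Ker(X^i)$ and $\Img(X^j)$ rather than around the $W\oplus U$ decomposition, which $Y$ does not preserve.
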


\begin{proof} It follows from condition that
\begin{gather*}
X^kU=0\ne X^{k-1}U;\\
\fa i=1\sco k\quad\quad n_i:=\dim\br{\Ker(X^i|_U)}-\dim\br{\Ker(X^{i-1}|_U)}>0;\\
X^kYW\subs X^kU=0,\quad X^kYW=0.
\end{gather*}

Let $i\in\{0\sco k\}$ be an arbitrary number. Denote by~$V_i$ the (obviously $Y$\dh invariant) subspace $\Ker(X^i)+(X^{k-i+1}V)\subs V$. According to condition,
\begin{gather}
\begin{aligned}
&\Ker(X^i)=\Ker(X^i|_W)\oplus\Ker(X^i|_U),&\quad\quad&X^{k-i+1}V=(X^{k-i+1}W)\oplus(X^{k-i+1}U);\\
&X^{k-i+1}U\subs\Ker(X^i|_U),&\quad\quad&X^{k-i+1}W\sups\Ker(X^{i+1}|_W);
\end{aligned}\notag\\
V_i=(X^{k-i+1}W)\oplus\Ker(X^i|_U)\sups\Ker(X^{i+1}|_W)\oplus\Ker(X^i|_U).\label{vi}
\end{gather}
Suppose additionally that $i\ne0$. Then,
\begin{gather}
XV_i\subs V_{i-1}\subs V_i;\notag\\
X^{i-1}(Y^{n_i}X^{k-i+1}W)=Y^{n_i-1}(X^kYW)=0,\notag\\
Y^{n_i}(X^{k-i+1}W)\subs\Ker(X^{i-1})\subs V_{i-1}.\label{dec1}
\end{gather}
Besides, $\Ker(X^i|_W)\oplus\Ker(X^{i-1}|_U)\subs V_{i-1}\cap\Ker(X^i)$,
\equ{
\dim\br{V_{i-1}\cap\Ker(X^i)}\ge\dim\br{\Ker(X^i|_W)}+\dim\br{\Ker(X^{i-1}|_U)}=\dim\br{\Ker(X^i)}-n_i.}
Thus, in the space~$V$, the subspaces $V_{i-1}$ and $\wt{V}_{i-1}:=V_{i-1}+\Ker(X^i)\sups V_{i-1}$ are invariant under the nilpotent operator~$Y$, and, also,
\equ{
\dim\br{\wt{V}_{i-1}/V_{i-1}}=\dim\br{\Ker(X^i)}-\dim\br{V_{i-1}\cap\Ker(X^i)}\le n_i,}
implying $Y^{n_i}\br{\wt{V}_{i-1}/V_{i-1}}=0$, $Y^{n_i}\br{\Ker(X^i)}\subs V_{i-1}$, $Y^{n_i}\br{\Ker(X^i|_U)}\subs V_{i-1}$. Therefore (see \eqref{vi}
and~\eqref{dec1}), $Y^{n_i}V_i\subs V_{i-1}$.

Applying~\eqref{vi} again, we obtain that
\begin{itemize}
\item $V_0=X^{k+1}W$, $YV_0=YX^{k+1}W=X(X^kYW)=0$\~
\item $V_k\sups\Ker(X^k|_U)=U$.
\end{itemize}
Note also that $\suml{i=1}{k}n_i=\dim\br{\Ker(X^k|_U)}=\dim U$.

Assume that there exist $p,q\ge0$ such that $p+q>\dim U$ and $Y^qX^pU\ne0$. Then, $X^pU\ne0$, and, thus, $p<k$, $0<k-p\le k$,
$X^pU\subs X^pV_k\subs V_{k-p}$. For $r:=\suml{i=1}{k-p}n_i$, we have
\begin{gather}
\dim U-r=\Br{\suml{i=1}{k}n_i}-\Br{\suml{i=1}{k-p}n_i}=\Br{\sums{k-p<i\le k}n_i}\ge p>\dim U-q,\notag\\
r<q,\quad q\ge r+1;\label{geq}\\
Y^rX^pU\subs Y^rV_{k-p}\subs V_0,\quad Y^{r+1}X^pU\subs YV_0=0.\label{br}
\end{gather}
By \eqref{geq} and~\eqref{br}, $Y^qX^pU=0$. So, we came to a~contradiction that proves the claim.
\end{proof}

\begin{imp}\label{triv} In the conditions of Lemma~\ref{bin}, each of the operators $Y^qX^p$ \ter{$p\ge0$, $q\ge1$, $p+q>\dim U+1$} is trivial.
\end{imp}

\begin{proof} We have $q':=q-1\ge0$ and $p+q'>\dim U$ implying $Y^{q'}X^pU=0$. Hence,
\equ{
Y^qX^pU=Y(Y^{q'}X^pU)=0,\quad\quad Y^qX^pW=(Y^{q'}X^p)(YW)\subs Y^{q'}X^pU=0.\qedhere}
\end{proof}

\begin{imp}\label{por} In the conditions of Lemma~\ref{bin}, the following holds\:
\equ{
\fa r\ge(\dim U+2)\ \ \fa Z\in A\quad\br{[Z,X]=[Z,Y]=0}\quad\Ra\quad\br{(Y+XZ)^r=(XZ)^r}.}
\end{imp}

\begin{proof} We have $(Y+XZ)^r=\sums{\sst{p,q\ge0;\\p+q=r}}\br{C_r^qY^q(XZ)^p}=(XZ)^r$ since, once $q\ge1$, the corresponding summand is $C_r^qY^qX^pZ^p=0$
(see Corollary~\ref{triv}).
\end{proof}

\begin{theorem}\label{comh} If $X,Y\in A_n$, $[X,Y]=0$, $m:=h(X)$, $l:=h(Y)$, $m\ge\frac{n}{2}+1$, and $l>n-m+2$, then $l=\bse{\frac{m}{p}}$ for some $p\in\N$.
\end{theorem}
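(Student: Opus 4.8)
The plan is to analyze the commuting pair $X,Y$ through the decomposition of $V$ induced by the largest Jordan cell of $X$. Since $m=h(X)\ge\frac n2+1>\frac n2$, the operator $X$ has exactly one Jordan cell of size $m$; call its span (inside a Jordan basis for $X$) the subspace $W$, and let $U$ be the $X$-invariant complement spanned by all smaller cells, so $V=W\oplus U$ and $\dim U=n-m\le\frac n2-1<m-1$. The key point is that $h(X|_U)=:k$ satisfies $k\le\dim U=n-m$, hence $k+1\le n-m+1<m$, so every cell of $X|_W$ (there is just one, of size $m$) has size $>k+1$. To apply Lemma~\ref{bin} I also need $YW\subs U$; this is not automatic, but I can arrange it by modifying $Y$: since $X|_W$ is a single cell and $Y$ commutes with $X$, the "$W\to W$ part" of $Y$ lies in $\F[X|_W]$ by Lemma~\ref{cent}, i.e. is a polynomial $g(X)$ with no constant term (as $Y$ is nilpotent), and after subtracting a suitable polynomial in $X$ — which changes neither $h(Y)$ in the relevant range nor the commuting relation — I may assume $YW\subs U$. (This reduction step is the first place where care is needed about whether $h(Y)$ is preserved; I expect it is, because modifying $Y$ by $g(X)$ with $k(g)$ large does not decrease the attainable power, and the hypothesis $l>n-m+2$ forces the dominant behaviour to come from outside $W$.)

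**Extracting the cell-size formula.**
Once Lemma~\ref{bin} applies, Corollary~\ref{triv} gives $Y^qX^pU=0$ whenever $p\ge0$, $q\ge1$, $p+q>\dim U+1=n-m+1$, and in particular $Y^qX^p\equiv0$ on all of $V$ under the same bound (that is the content of Corollary~\ref{triv} as stated). Now consider the restriction of $Y$ to the cyclic subspace generated by $W$: since $X|_W=J_{0,m}$ and everything commutes, on $W$ the operator $Y$ acts (before the above modification, or tracked through it) essentially as a polynomial in $X|_W$. The honest route is: on the single big cell, $Y$ restricted appropriately is $f(J_{0,m})$ for some $f$ with $k:=k(f)\ge1$, and then Lemma~\ref{int} tells us its Jordan form on that $m$-dimensional piece has all cell sizes in $\{\lfloor m/k\rfloor,\lceil m/k\rceil\}$, the largest being $\lceil m/k\rceil=\lfloor m/p\rfloor$ after the usual rewriting with $p:=\lceil m/k\rceil$... more directly, Corollary~\ref{inp} says the set of cell sizes of $f(J_{0,m})$ is exactly $\{\lfloor m/k\rfloor,\lceil m/k\rceil\}$. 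So the cells of $Y$ coming from the big cell of $X$ have sizes of the form $\lceil m/k\rceil$.

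**Pinning down which cell of $Y$ is the largest.**
It remains to show $l=h(Y)$ is realized on the $W$-part, so that $l=\lceil m/k\rceil=\bse{\frac mk}$, which is the claimed form (with $p:=k$). Here is where the hypothesis $l>n-m+2$ does its work: any Jordan cell of $Y$ not "anchored" in the big cell of $X$ contributes a bounded amount — concretely, using Corollary~\ref{triv} with $p=0$ is useless, so instead I argue that $Y^{n-m+1}$ already kills the part of $V$ governed by $U$ together with the $Y$-images, and a cell of $Y$ of size $l$ forces $Y^{l-1}\ne0$; combining with the vanishing $Y^qX^pV=0$ for $p+q>n-m+1$ shows that such a cell must interact nontrivially with $X^p$ for $p$ up to $m$, which can only happen inside $W$. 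More carefully: if $v$ generates a $Y$-cell of size $l$, write $v=w+u$ with $w\in W$; if $w=0$ then $v\in U$ and $Y^{l-1}v\ne0$ with $v\in U$ contradicts $Y^{l-1}U=0$ (since $l-1\ge n-m+2>\dim U\ge h(Y|_{\text{stuff in }U})$, via Lemma~\ref{bin} with $p=0$... which needs $q>\dim U$, i.e. $l-1>n-m$, true). So $w\ne0$, $v$ "lives on" the big cell, and then the size of its $Y$-orbit is governed by $f(J_{0,m})$, giving $l=\bse{\frac mk}$.

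**Main obstacle.**
The delicate step is the reduction $YW\subseteq U$ and the bookkeeping that this does not disturb $h(Y)$ — i.e. that after replacing $Y$ by $Y-g(X)$ the hypotheses $l>n-m+2$ and the conclusion's $l$ are about the same number. I expect this to be the crux: one must check that the polynomial part $g(X)$ of $Y$ on $W$ has $k(g)$ large enough (at least the size of the largest cell of $Y|_W$, which under $l>n-m+2$ is forced to be $\ge 2$ and in fact the dominant one), so that subtracting it strictly decreases $h$ of the $W$-part and leaves the global $h(Y)=l$ untouched; equivalently, that the big cell of $Y$ is the one sitting on the big cell of $X$. Everything after that is an application of Lemma~\ref{cent}, Lemma~\ref{int}, and Corollary~\ref{inp}.
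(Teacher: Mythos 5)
Your setup is the paper's: the decomposition $V=W\oplus U$ coming from the unique big Jordan cell of $X$, the observation that $h(X|_U)=:k\le\dim U=n-m<m-1$, and the identification (via Lemma~\ref{cent}) of the ``$W$-to-$W$ part'' of $Y$ with a polynomial $f(X)$ of positive $t$-order, leading to $\tilde Y:=Y-f(X)$ with $\tilde YW\subs U$. What goes wrong is exactly the ``main obstacle'' you flag at the end, and it cannot be patched in the direction you propose. Once $\tilde YV\subs U$ and $\tilde YU\subs U$, one has $\tilde Y^{\,q}V\subs\tilde Y^{\,q-1}U$, so $h(\tilde Y)\le\dim U+1=n-m+1$. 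But the hypothesis is $l=h(Y)>n-m+2$, so $h(\tilde Y)<h(Y)$ \emph{necessarily}: the subtraction drastically lowers the nilpotency order. There is no way to arrange the replacement so that ``$h(Y)$ in the relevant range'' is preserved, and consequently your later appeals to Lemma~\ref{bin} (with $p=0$) to conclude $Y^{l-1}U=0$ are unjustified, since that lemma applies to $\tilde Y$, not to $Y$. Likewise ``the size of its $Y$-orbit is governed by $f(J_{0,m})$'' is asserted without a mechanism: $Y$ is not equal to $f(X)$ on $V$, and the $Y$-cyclic subspace of a vector $v=w+u$ mixes the $W$- and $U$-components in a way your argument never controls.

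The paper's route sidesteps cell bookkeeping entirely. It does \emph{not} replace $Y$; it keeps both $Y$ and $\tilde Y$ and applies Corollary~\ref{por} (built on Lemma~\ref{bin} and Corollary~\ref{triv}) to get the identity $Y^r=(\tilde Y+XZ)^r=(XZ)^r=f^r(X)$ for every $r\ge\dim U+2=n-m+2$. Since $l>n-m+2$ forces both $l$ and $l-1$ into this range, one reads $h(Y)$ directly off the vanishing of $f^r(X)$: $Y^r=0\Lra k(f^r)=pr\ge m\Lra r\ge m/p$, and combining $Y^l=0\ne Y^{l-1}$ with $l,l-1\ge n-m+2$ gives $l\ge m/p>l-1$, i.e.\ $l=\bse{\frac mp}$. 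The ingredient you are missing is precisely Corollary~\ref{por}: it lets you compare high powers of $Y$ and of $f(X)$ without ever claiming the Jordan cells of $Y$ are ``anchored'' anywhere. To repair your proof, abandon the attempt to show $h(\tilde Y)=h(Y)$ (false) and the attempt to locate the largest cell of $Y$ geometrically, and instead establish and use the power identity $Y^r=f^r(X)$ for $r\ge n-m+2$.
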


\begin{proof} By condition, $2m\ge n+2$, $n-m\le m-2$. Further, in the space~$V$, there exists a~basis $(e_1\sco e_n)$ in which the operator~$X$ has
a~Jordan matrix with the first cell of size~$m$. So, $V$ is the direct sum of the $X$\dh invariant subspaces $W:=\ha{e_i\cln i\le m}$
and $U:=\ha{e_i\cln i>m}$. Note that $k:=h(X|_U)\le\dim U=n-m<m-1$ implying $X^{m-1}U=0$. For the subspaces $\wt{W}:=\ha{e_i\cln i<m}$
and $\wt{V}:=\Ker(X^{m-1})=\wt{W}\oplus U$, we have $Y\wt{V}\subs\wt{V}$, $Y\in A_n$, and $\dim\br{V/\wt{V}}=1$. Therefore, $YV\subs\wt{V}$,
$Ye_m\in YV\subs\wt{V}=\wt{W}\oplus U$, i.\,e. $Ye_m=w+u$ ($u\in U$, $w\in\wt{W}=\ha{X^ie_m\cln i\in\N}$), and, thus, $w=(XZ)e_m$ ($Z\in\F[X]$).

Note that $XZ=f(X)$ where $f\in\br{\F[t]}\wo\{0\}$ and $f\dv t$. So, $p:=k(f)\in\N$.

The subalgebra $\wt{A}:=\F[X,Y,Z]=\F[X,Y]\subs A$ is commutative. Hence, $\wt{A}_n:=\wt{A}\cap A_n\nl\wt{A}$. Also, $X,Y,Z\in\wt{A}$ and $X,Y\in\wt{A}_n$
implying $\wt{Y}:=Y-XZ\in\wt{A}_n$. Further,
\begin{gather}
\wt{Y}e_m=Ye_m-(XZ)e_m=Ye_m-w=u\in U;\notag\\
\wt{Y}W=\wt{Y}\Br{\br{\F[X]}e_m}=\br{\F[X]}\br{\wt{Y}e_m}\subs\br{\F[X]}U=U.\label{wu}
\end{gather}
Thus, $X,\wt{Y}\in\wt{A}_n$ are commuting nilpotent operators. Besides, $m-1>k$, $m>k+1$. Using Corollary~\ref{por} and the relation~\eqref{wu}, we obtain
for an arbitrary $r\ge\dim U+2$ the equality $Y^r=\br{\wt{Y}+XZ}^r=(XZ)^r=f^r(X)$ that, since $k(f^r)=pr$, implies the equivalences
\equ{\begin{aligned}
(r\ge l)\quad&\Lra\quad(Y^r=0)\quad&&\Lra\quad\br{f^r(X)=0}\quad&&\Lra\quad(f^r\dv t^m)\quad\Lra\\
&\Lra\quad\br{k(f^r)\ge m}\quad&&\Lra\quad(pr\ge m)\quad&&\Lra\quad\br{r\ge\frac{m}{p}}.
\end{aligned}}
By condition, $l,l-1\ge n-m+2=\dim U+2$. Hence, $l\ge\frac{m}{p}>l-1$, $l=\bse{\frac{m}{p}}$.
\end{proof}

\begin{imp}\label{hx} If $X,Y\in A_n$, $[X,Y]=0$, and $m:=h(X)\ge\frac{n}{2}+1$, then each cell size of the Jordan form of the operator~$Y$, that is greater
than $n-m+2$ and $\bse{\frac{m}{2}}$, equals~$m$.
\end{imp}

\begin{proof} By Theorem~\ref{comh}, at least one of the following conditions holds\:
\begin{nums}{-1}
\item\label{lenm} $h(Y)\le n-m+2$\~
\item\label{lem2} $h(Y)\le\bse{\frac{m}{2}}$\~
\item\label{eqm} $h(Y)=m$.
\end{nums}
In the case~\ref{lenm} (resp.~\ref{lem2}), all cells of the Jordan form of the operator~$Y$ have sizes $\le n-m+2$ (resp. $\le\bse{\frac{m}{2}}$).
In the case~\ref{eqm}, one of these cells is of size~$m$, and all the rest\ti of sizes $\le n-m$.
\end{proof}

\begin{lemma} If $m>\frac{n}{2}$, $X,Y\in A_n$, and $X^m=Y^m=[X,Y]=0$, then
\equ{
\fa p,q\ge0\quad(p+q=m)\quad\Ra\quad(Y^qX^p=0).}
\end{lemma}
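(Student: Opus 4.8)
The plan is to argue by induction on the pair $(n,m)$ ordered lexicographically, after the harmless preliminary reduction to the range $n/2<m\le n$: if $m>n$ then $X^n=Y^n=0$, and the statement for the index $n$ implies the one for $m$ upon multiplying $X^{p'}Y^{q'}$ ($p'+q'=n$) by a further power of $X$ or of~$Y$. Throughout, $h(X),h(Y)\le m$ since $X^m=Y^m=0$, and $Y^qX^p=0$ is trivial whenever $p\ge h(X)$ or $q\ge h(Y)$; so only the range $p<h(X)$, $q<h(Y)$ (which forces $p,q\ge1$ and $h(X)+h(Y)\ge m+2$) needs attention. The base cases $n\le1$ are trivial.

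The main step is the case $2h(X)\ge n+2$ (the case $2h(Y)\ge n+2$ being symmetric), to be treated exactly as in the proof of Theorem~\ref{comh}. Writing $a:=h(X)\le m$, from $2a\ge n+2$ one gets $n\le2m-2$, that $X$ has a unique Jordan cell of maximal size~$a$, and $\dim\br{V/\Ker(X^{a-1})}=1$. Decompose $V=W\oplus U$ with $W=\F[X]v$ ($v$ a generator of the big cell, $\dim W=a$) and $U$ the sum of the remaining cells, so $k:=h(X|_U)\le\dim U=n-a\le a-2$. Since $YV\subs\Ker(X^{a-1})=(XW)\oplus U$, one finds $Z\in\F[X]$ with $\wt Y:=Y-XZ$ satisfying $\wt YW\subs U$, and $\wt Y\in A_n$ because $X$, $Y$, $XZ$ all lie in the ideal $\F[X,Y]\cap A_n$ of the commutative algebra $\F[X,Y]$. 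Now $(X,\wt Y,W,U)$ satisfies the hypotheses of Lemma~\ref{bin} (as $a>k+1$), so Corollary~\ref{triv} gives $\wt Y^{\,i}X^{\,j}=0$ whenever $i\ge1$ and $i+j>\dim U+1$. Expanding by the binomial theorem (legitimate since $\wt Y$ and $XZ$ commute),
\equ{Y^qX^p=(\wt Y+XZ)^qX^p=\sums{i+j=q}C_q^i\,\wt Y^{\,i}X^{\,j+p}Z^{\,j},}
the summand with $i=0$ equals $X^{p+q}Z^q=X^mZ^q=0$, and each summand with $i\ge1$ vanishes because $i+(j+p)=p+q=m\ge a\ge(n-a)+2=\dim U+2$. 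Hence $Y^qX^p=0$ for all $p+q=m$.

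In the complementary range $2h(X)\le n+1$ and $2h(Y)\le n+1$, first suppose $n\le2m-3$; then $h(X),h(Y)\le(n+1)/2\le m-1$, so $X^{m-1}=Y^{m-1}=0$, and since $m-1>n/2$ and $n<2(m-1)$, the inductive hypothesis at $(n,m-1)$ gives $Y^qX^p=0$ for $p+q=m-1$, hence for $p+q=m$ after one further multiplication by $X$ or~$Y$. There remain only the two ``thin'' dimensions $n\in\{2m-2,\,2m-1\}$, which I would dispatch by a descent along a common invariant line: if one of $X$, $Y$ — say $X$, with $a:=h(X)$ — has a unique maximal cell, then $L:=X^{a-1}V$ is one-dimensional, $X$\dh and $Y$\dh invariant with $Y$ nilpotent, hence annihilated by both $X$ and~$Y$; passing to $\ol V:=V/L$ (of dimension $n-1<2m$) the inductive hypothesis at $(n-1,m)$ yields that every degree-$m$ monomial in $X$, $Y$ maps $V$ into~$L$, so the same monomials composed once more with $X$ or $Y$ vanish, and then $X^m=Y^m=0$ together with the explicit action of $\F[X]$ on the maximal cell forces the degree-$m$ monomials themselves to vanish; if instead every maximal cell of $X$ and of $Y$ has multiplicity $\ge2$, the inequality $n\le2m-1$ leaves only a short list of Jordan types (e.g. $X\sim J_{0,m-1}\oplus J_{0,m-1}$ when $n=2m-2$), each settled by direct computation using the centralizer description of Lemma~\ref{cent}.

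The hard part is exactly this thin regime $n\in\{2m-2,2m-1\}$: there the numerical slack $2m\ge n+2$ that drives Corollary~\ref{triv} through the $W\oplus U$ decomposition is no longer available, Lemma~\ref{bin} fails when the non-maximal cells of $X$ form a single block of size $a-1$, and the crude estimate $\dep(X^p)+\dep(Y^q)\ge n$ — obtained from $\dep(X^p)=\sums{C}\min\{|C|,p\}\ge\frac pm\sums{C}|C|=\frac{pn}m$, valid because every cell $C$ of $X$ has $|C|\le m$ — is just one dimension short of forcing $V=\Ker(X^p)+\Ker(Y^q)$, which would make $Y^qX^p=0$ immediate. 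Closing that last gap is the purpose of the line-descent argument above, supported where necessary by Theorem~\ref{comh} and Corollary~\ref{hx} to pin $h(Y)$ into the correct relation with $h(X)$.
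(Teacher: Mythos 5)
The proposal has a genuine gap precisely in the ``thin'' regime $n\in\{2m-2,\,2m-1\}$, which you single out as the hard part but do not actually close. The line-descent step proves $Y^qX^pV\subs L$ for $p+q=m$, where $L=X^{a-1}V$ is one-dimensional with $XL=YL=0$; composing once more with $X$ or $Y$ therefore kills $L$ and shows that all monomials of degree $m+1$ vanish, which is one degree too high. The sentence ``$X^m=Y^m=0$ together with the explicit action of $\F[X]$ on the maximal cell forces the degree-$m$ monomials themselves to vanish'' is an assertion, not an argument: nothing written there rules out $Y^qX^pv$ being a nonzero multiple of $X^{a-1}v$ for $1\le p\le a-1$, $1\le q\le h(Y)-1$. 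The residual case (``every maximal cell of $X$ and of $Y$ has multiplicity $\ge2$''), to be ``settled by direct computation using Lemma~\ref{cent}'', is also left undone, and Lemma~\ref{cent} only describes the centralizer of a single Jordan block, not of $J_{0,m-1}\oplus J_{0,m-1}$.

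The paper's proof is simpler and sidesteps all of this by inducting on $n$ and splitting on $\dep(Y)$ rather than on $h(X)$. If $\dep(Y)\ge2$, pass to $\wt V:=YV$. It is $X$- and $Y$-invariant with $\dim\wt V=n-\dep(Y)\le n-2$, and moreover $X^{m-1}\wt V=0$: indeed $X^{m-1}V$ is at most one-dimensional (at most one Jordan cell of $X$ has size $\ge m$ since $n<2m$) and $Y$-invariant, hence $YX^{m-1}V=0$, i.e.\ $X^{m-1}\wt V=X^{m-1}YV=0$. Also $Y^{m-1}\wt V=Y^mV=0$. Since $\dim\wt V\le n-2<2(m-1)$, the inductive hypothesis at dimension $\dim\wt V$ and exponent $m-1$ applies, giving $Y^{q'}X^p\wt V=0$ for $p+q'=m-1$; then for $q\ge1$ with $p+q=m$ one has $Y^qX^pV=Y^{q-1}X^p(YV)=0$, while $q=0$ is $X^m=0$. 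If $\dep(Y)=1$, then $Y$ is a single Jordan block, so by Lemma~\ref{cent} $X\in\F[Y]$, hence $X=YZ$ with $Z\in\F[Y]$, and $Y^qX^p=Y^{p+q}Z^p=Y^mZ^p=0$. The dimension drop by at least $2$ when $\dep(Y)\ge2$ is exactly what lets one lower $m$ to $m-1$ while keeping $m-1>\tfrac12\dim\wt V$; your subcase $n\le2m-3$ recreates the same slack only by a numerical coincidence, and that is precisely why the two extreme dimensions escape your case division. You never consider the subspace $YV$, which is the key move here.
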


\begin{proof} Use induction by~$n$. The case $n=1$ is obvious. So, prove the claim for $n>1$ with an assumption that it is already proved for all less values.

Since $n<2m$, the number of cells of sizes $\ge m$ in the Jordan form of the operator~$X$ is $\le1$ and, on the other hand, equal to
$\dep(X^m)-\dep(X^{m-1})=\rk(X^{m-1})-\rk(X^m)=\rk(X^{m-1})$. In the space~$V$, the subspace $X^{m-1}V$ of dimension $\rk(X^{m-1})\le1$ is invariant under
the nilpotent operator~$Y$, and, hence, $Y(X^{m-1}V)=0$, $YX^{m-1}=0$.

First, suppose that $\dep(Y)\ge2$.

The subspace $\wt{V}:=YV\subs V$ is $X$- and $Y$\dh invariant. Besides, $X^{m-1}\wt{V}\bw=X^{m-1}YV=0$, $Y^{m-1}\wt{V}=Y^mV=0$, and, also,
$\dim\wt{V}=n-\dep(Y)\le n-2<n,2(m\bw-1)$. Applying the inductive hypothesis to the commuting nilpotent operators
$X|_{\wt{V}},Y|_{\wt{V}}\bw\in\End\br{\wt{V}}$, we obtain
\equ{
\fa p,q\ge0\quad(p+q=m-1)\quad\Ra\quad\br{Y^qX^p\wt{V}=0}.}
For arbitrary $p,q\ge0$, such that $p+q=m$, we have $Y^qX^p=0$. Indeed, if $q=0$, then $p=m$ and $Y^qX^p=X^m=0$. If $q\ge1$, then
$q':=q-1\ge0$ and $p+q'=m-1$ implying $Y^{q'}X^p\wt{V}=0$, $Y^qX^pV=Y^{q'+1}X^pV=Y^{q'}X^pYV=Y^{q'}X^p\wt{V}=0$.

Now assume that $\dep(Y)=1$.

The Jordan form of the operator~$Y$ includes exactly one cell. According to Lemma~\ref{cent}, the operator~$X$ belongs $\F[Y]$ and, being nilpotent, has
form $YZ$ where $Z\in\F[Y]$. Clearly, $[Y,Z]=0$. Hence, if $p,q\ge0$ and $p+q=m$, then $Y^qX^p=Y^q(YZ)^p=Y^mZ^p=0$.
\end{proof}

\begin{imp}\label{mhal} If $m>\frac{n}{2}$, $X,Y\in A_n$, and $X^m=Y^m=[X,Y]=0$, then, for any $Z\in\ha{X,Y}$, we have $Z^m=0$.
\end{imp}

\begin{lemma}\label{mdeg} If $m,k\in\N$, $m=(\cha\F)^k$, $X,Y\in A_n$, and $X^m=Y^m=[X,Y]=0$, then, for any $Z\in\ha{X,Y}$, we have $Z^m=0$.
\end{lemma}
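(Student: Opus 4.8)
The statement to prove is Lemma (the one labeled \texttt{mdeg}): if $m=(\cha\F)^k$ and $X,Y\in A_n$ commute with $X^m=Y^m=[X,Y]=0$, then $Z^m=0$ for every $Z\in\ha{X,Y}$. Since $\ha{X,Y}$ is spanned by $X$ and $Y$ and we may rescale, it suffices to show $(X+Y)^m=0$. The obvious idea is to expand $(X+Y)^m$ by the binomial theorem in the commutative subalgebra $\F[X,Y]$: we get $(X+Y)^m=\sum_{j=0}^m\binom{m}{j}X^jY^{m-j}$. The key arithmetic fact is that when $m=p^k$ with $p=\cha\F$, all the binomial coefficients $\binom{m}{j}$ for $0<j<m$ are divisible by $p$, hence vanish in $\F$ (this is the classical ``Freshman's dream''/Kummer's theorem statement, which I would cite as elementary). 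Thus $(X+Y)^m=X^m+Y^m=0$, and rescaling $X,Y$ shows $(aX+bY)^m=0$ for all $a,b\in\F$, i.e. $Z^m=0$ for all $Z\in\ha{X,Y}$.

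The only subtlety is that $\cha\F$ could be $0$, in which case $m=0^k$ makes no sense; but the hypothesis $m\in\N$ forces $\cha\F=p>0$ here, so this case simply does not arise. I would state this explicitly: the hypothesis $m=(\cha\F)^k$ with $m,k\in\N$ presupposes $\cha\F=p>0$. Then the whole argument is the single displayed computation above. In fact, there is essentially no obstacle: this lemma is the ``easy half'' of the positive-characteristic analysis, the counterpart to Corollary \texttt{mhal}, and unlike that one it needs no induction or structure theory — just the vanishing of middle binomial coefficients modulo $p$.

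Here is the argument in more detail. Write $p:=\cha\F>0$, so $m=p^k$. In the commutative subalgebra $\F[X,Y]\subs A$ the binomial theorem applies, so for any $a,b\in\F$,
\equ{(aX+bY)^m=\sums{j=0}^m\binom{m}{j}a^jb^{m-j}X^jY^{m-j}.}
By Kummer's (or Lucas's) theorem, $\binom{p^k}{j}\eqi0\pmod p$ whenever $0<j<p^k$, so all those terms vanish in $\F$, leaving $(aX+bY)^m=a^mX^m+b^mY^m=0$. Since every $Z\in\ha{X,Y}$ has the form $aX+bY$, we conclude $Z^m=0$. The hard part, if any, is purely expository: making sure the degenerate possibility $\cha\F=0$ is cleanly excluded and citing the binomial-coefficient divisibility fact at an appropriate level of detail.
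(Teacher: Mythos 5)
Your proof is correct and takes essentially the same approach as the paper: the paper phrases the key step as the Frobenius map $x\mapsto x^p$ (hence $x\mapsto x^m$) being a ring endomorphism of the commutative algebra $\F[X,Y]$, which is exactly the binomial-coefficient vanishing you spell out. The only cosmetic difference is that you unpack the freshman's dream explicitly, while the paper invokes it as the Frobenius endomorphism and observes that $\ha{X,Y}$ lies in its kernel.
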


\begin{proof} According to condition, $p:=\cha\F$ is a~prime positive integer. On the commutative algebra $\wt{A}:=\F[X,Y]\subs A$, the mapping of raising to the
power~$p$ is an endomorphism of the \textit{ring}~$\wt{A}$\~ the same can be said about the mapping $\ph\cln\wt{A}\to\wt{A}$ of raising to the power~$m$.
Also, $\ha{X},\ha{Y}\subs\Ker\ph$ implying $\ha{X,Y}=\ha{X}+\ha{Y}\subs\Ker\ph$.
\end{proof}

\section{Proofs of the results}

Recall that the main field~$\F$ is supposed to be algebraically closed (and, consequently, infinite).

Consider the set~$\Pc$ of all finite non-ordered sets of non-unit positive integers, \textit{taking multiplicities into account} while, in part, defining
the \textit{order}~$|P|$, the \textit{element sum} $\Sig(P)$, the \textit{union} operation~$\cup$, and, also, while defining a~set by listing its elements
in figure brackets. Assigning to each matrix $X\in A_n$ the set $G(X)\in\Pc$ of all non-unit sizes of its Jordan cells, we obtain a~surjective
mapping~$G$ of the set~$A_n$ to the subset $\Pc(n):=\bc{P\in\Pc\cln\Sig(P)\le n}\subs\Pc$ whose fibres are exactly the orbits of the action of~$A^*$
on~$A_n$ by conjugations.

Let $M\in\Mc_n$ be an arbitrary subset.

By~\eqref{conj}, $M=\bc{X\in A_n\cln G(X)\in\Pc_M}$ ($\Pc_M\subs\Pc(n)$).

\begin{theorem} If $P_1,P_2,P:=P_1\cup P_2\in\Pc(n)$, then $(P\in\Pc_M)\Lra(P_1,P_2\in\Pc_M)$.
\end{theorem}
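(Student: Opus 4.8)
The plan is to reduce both implications to one and the same pair of commuting nilpotent operators built from a block decomposition $V=\F^n=V_1\oplus V_2$ separating the Jordan cells recorded by~$P_1$ from those recorded by~$P_2$. Since $\Sig$ is additive with respect to~$\cup$, we have $\Sig(P_1)+\Sig(P_2)=\Sig(P)\le n$, so we may fix such a decomposition with $\dim V_1=\Sig(P_1)$ (hence $\dim V_2=n-\Sig(P_1)\ge\Sig(P_2)$), a nilpotent operator~$X$ acting on~$V_1$ with Jordan type exactly~$P_1$ and annihilating~$V_2$, and a nilpotent operator~$Y$ acting on~$V_2$ with Jordan type $P_2$ together with $n-\Sig(P)$ unit cells and annihilating~$V_1$. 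Then $X,Y\in A_n$, $[X,Y]=0$, $G(X)=P_1$, $G(Y)=P_2$, $G(X+Y)=P_1\cup P_2=P$, and $X$, $Y$, $X+Y$ all lie in the commutative subspace $\ha{X,Y}$.

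The implication $(P_1,P_2\in\Pc_M)\Ra(P\in\Pc_M)$ is then immediate\: from $M=\{Z\in A_n\cln G(Z)\in\Pc_M\}$ we get $X,Y\in M$\~ they commute, so $\ha{X,Y}\subs M$ by~\eqref{comm}\~ hence $X+Y\in M$ and $P=G(X+Y)\in\Pc_M$.

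For the converse, take $Z\in M$ with $G(Z)=P$. Splitting the Jordan cells of~$Z$ into a sub\dh collection of type~$P_1$ and its complement of type~$P_2$ (possible since $G(Z)=P=P_1\cup P_2$) and conjugating\ti which keeps us in~$M$ by~\eqref{conj}\ti we may assume $Z=X+Y$ with $X=Z|_{V_1}\oplus0$ and $Y=0\oplus Z|_{V_2}$ exactly as above. The one substantial point, modelled on the proof of Lemma~\ref{proj}, is to rescale the $V_2$\dh part while fixing the $V_1$\dh part\: for $a\in\F^*$ pick $h\in\GL(V_2)$ with $h\br{Z|_{V_2}}h^{-1}=a\br{Z|_{V_2}}$ (a suitable diagonal matrix in a Jordan basis of~$Z|_{V_2}$), and put $g:=\id_{V_1}\oplus h\in A^*$\~ then $gXg^{-1}=X$, $gYg^{-1}=aY$, so $gZg^{-1}=X+aY\in M$ by~\eqref{conj}. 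Choosing $a\in\F^*\sm\{1\}$ (possible since $|\F|=\bes$), the commuting elements $X+Y$ and $X+aY$ of~$M$ yield $\ha{X+Y,X+aY}\subs M$ by~\eqref{comm}\~ since $a\ne1$, each of $X$ and~$Y$ is a linear combination of $X+Y$ and~$X+aY$, so $X,Y\in M$, whence $P_1=G(X)\in\Pc_M$ and $P_2=G(Y)\in\Pc_M$.

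I expect the rescaling step to be the main obstacle\: producing $g\in A^*$ that centralizes~$X$ and multiplies~$Y$ by a scalar (and checking this), together with phrasing the preliminary "up to conjugation" reduction precisely. The remaining ingredients\ti additivity of~$\Sig$, computing $G$ of block\dh diagonal operators, and expressing $X,Y$ through $X+Y$ and~$X+aY$\ti are routine.
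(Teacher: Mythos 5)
Your proposal is correct and essentially matches the paper's argument: the same block decomposition into commuting nilpotent pieces $X_1,X_2$ with Jordan types $P_1,P_2$, the same observation that $X_1+aX_2$ for $a\in\F^*$ all have Jordan type~$P$ (you construct the conjugating $g$ explicitly; the paper invokes its Corollary on $f(J_{0,m})\sim J_{0,m}$ for $k(f)=1$, which here is just $at$), and the same use of~\eqref{comm} to conclude that the intersection of $M$ with the commutative plane $L=\ha{X_1,X_2}$ is a subspace. The only stylistic difference is that the paper treats the two directions symmetrically by showing both $P\in\Pc_M$ and $P_1,P_2\in\Pc_M$ are each equivalent to $M\cap L=L$, whereas you argue each implication separately; the content is identical.
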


\begin{proof} Let $X\in A_n$ be an arbitrary matrix such that $G(X)=P$. It is naturally represented in the form $X_1+X_2$, where $X_i\in A_n$,
$G(X_i)=P_i$, $[X_1,X_2]=0$, $M':=\{X_1\bw+aX_2\cln a\bw\in\F^*\}\subs A_n$, and $G(M')=\{G(X)\}=\{P\}$ (see Corollary~\ref{poly}). Besides, the
subspace $L:=\ha{M'}=\ha{X_1,X_2}\subs A$ is commutative, and, hence, its subset $M\cap L$ is a~subspace. It rests to note that
\equ{\begin{aligned}
(P\in\Pc_M)&\quad\Lra\quad&(M'\subs M)&\quad\Lra\quad&(M'\subs M\cap L)&\quad\Lra\quad(M\cap L=L);\\
(P_1,P_2\in\Pc_M)&\quad\Lra\quad&(X_1,X_2\in M)&\quad\Lra\quad&(X_1,X_2\in M\cap L)&\quad\Lra\quad(M\cap L=L).\qedhere
\end{aligned}}
\end{proof}

\begin{imp}\label{sep} An arbitrary set $\{n_1\sco n_k\}\in\Pc(n)$ belongs to~$\Pc_M$ if and only if $\{n_i\}\in\Pc_M$ for any $i=1\sco k$.
\end{imp}

Thus, the subset~$M$ is uniquely defined by the subset~$Q_M$ of all $m\in\{2\sco n\}$ such that $\{m\}\in\Pc_M$. Namely, to each
subset $Q\subs\{2\sco n\}$, corresponds a~subset $M(Q)\subs A_n$ (see \Ss\ref{introd}), and then $M=M(Q_M)$. Obviously,
$(Q_M=\es)\Lra\br{M=\{0\}}$.

\begin{lemma}\label{pow} If $k\in\N$ and $X\in M$, then $X^k\in M$.
\end{lemma}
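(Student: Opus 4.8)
The statement to prove is Lemma~\ref{pow}: if $k\in\N$ and $X\in M$ (where $M\in\Mc_n$), then $X^k\in M$. The idea is to reduce an arbitrary power to the two tools we already have: the characterization $M=M(Q_M)$ via the set $Q_M$ of admissible single cell sizes (through Corollary~\ref{sep}), and the information on the Jordan type of $f(J_{0,m})$ for a polynomial $f$ with $k(f)=1$ supplied by Corollary~\ref{poly}, together with the more refined count in Lemma~\ref{int} and Corollary~\ref{inp}.

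First I would dispose of the trivial cases $k=0$ (then $X^0$ is the identity, which is \emph{not} in $M\subs A_n$ unless $n=0$, so actually one should read the statement for $k\ge1$; and $k=1$ is immediate) and reduce to a single Jordan cell. Indeed, write the Jordan form of $X$ as a direct sum of nilpotent cells; by Corollary~\ref{sep}, $X\in M$ is equivalent to $\{m\}\in\Pc_M$ for each cell size $m$ occurring in $X$, i.e.\ $m\in Q_M\cup\{1\}$ for all such $m$. The matrix $X^k$ has Jordan form obtained by raising each cell $J_{0,m}$ to the $k$th power, so again by Corollary~\ref{sep} it suffices to show: for each $m$ with $J_{0,m}$ a cell of $X$, all cell sizes of $(J_{0,m})^k$ lie in $Q_M\cup\{1\}$.

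Now apply Lemma~\ref{int} and Corollary~\ref{inp} with $f(t)=t^k$: if $k\ge m$ then $(J_{0,m})^k=0$ and there is nothing to prove; if $1\le k<m$ then $(J_{0,m})^k$ has all cells of size $\bs{\frac m k}$ or $\bse{\frac m k}$, and both of these are $\le m$. So I must argue that every integer $s$ with $1\le s\le m$ and $m\in Q_M\cup\{1\}$ satisfies $s\in Q_M\cup\{1\}$ --- in other words, that $Q_M\cup\{1\}$ is \emph{downward closed} in $\{1\sco n\}$. This is the heart of the matter, and it follows from a "splitting a big cell into a big cell plus a small cell" argument: given $m\in Q_M$ and $1\le s<m$, I would realize $J_{0,m}$ (up to conjugation) as the top-left $m\times m$ block sitting inside an ambient $n$-dimensional space, produce from $X\in M$ a commuting pair whose linear combinations stay in $M$ by~\eqref{comm}, and extract that a cell of size $s$ must be admissible --- concretely, one uses that $J_{0,m}=J_{0,s}\oplus J_{0,m-s}$ modulo a suitable commuting perturbation, or directly that $f(J_{0,m})$ for a cleverly chosen $f$ with a controllable $k(f)$ produces cells of size $s$, combined with Corollary~\ref{sep}.

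The step I expect to be the genuine obstacle is exactly this downward-closedness of $Q_M\cup\{1\}$: everything else is bookkeeping with Jordan forms and the already-established reductions. The subtlety is that we are only allowed to take linear combinations of \emph{commuting} elements, so one cannot simply "delete part of a cell" --- one has to exhibit, inside the algebra, an explicit nilpotent $Y$ commuting with $X$ such that some $Y\in\ha{X,Y}\subs M$ has a cell of the desired smaller size, or invoke the powers $(J_{0,m})^k$ directly through Corollary~\ref{inp} to hit the needed sizes. I would lean on the latter: since $\bse{m/k}$ ranges (as $k$ runs over $2\sco m-1$) over a set of integers that, together with the $\bs{m/k}$, covers all of $\{1\sco m-1\}$ when $m\ge2$, iterating Lemma~\ref{pow} itself (hence an induction on $k$, or rather on $m$) closes the loop --- so the proof is best organized as an induction, using that $X^k$ for $k<m$ has strictly smaller cells and applying the inductive hypothesis.
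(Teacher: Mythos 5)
Your plan misses the short trick the paper actually uses, and the route you sketch instead has a genuine gap.

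The paper's proof is a one-liner: for $k>1$, set $Y:=X+X^k=f(X)$ with $f(t)=t+t^k$, so $k(f)=1$; by Corollary~\ref{poly}, each $f(J_{0,m})$ is similar to $J_{0,m}$, hence $Y$ is similar to $X$ and lies in $M$ by~\eqref{conj}; then $[X,Y]=0$ gives $\ha{X,Y}\subs M$ by~\eqref{comm}, and $X^k=Y-X\in\ha{X,Y}$. No knowledge of the Jordan type of $X^k$ is needed at all. Indeed, notice the order of the logic in the paper: Corollary~\ref{div} (the statement that $\bs{\frac mk},\bse{\frac mk}\in Q_M\cup\{1\}$ whenever $m\in Q_M$) is \emph{deduced from} Lemma~\ref{pow} together with Corollaries~\ref{inp} and~\ref{sep}, not the other way around.

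Your route inverts that order and therefore cannot be completed as stated. Reducing via Corollary~\ref{sep} to single cells is fine, and the cell sizes of $(J_{0,m})^k$ being $\bs{\frac mk},\bse{\frac mk}$ (Corollary~\ref{inp}) is also fine. But to finish you would need exactly the content of Corollary~\ref{div}, and you instead reach for two assertions that are simply false. First, $Q_M\cup\{1\}$ is \emph{not} downward closed: by Theorem~\ref{main} with, say, $n=10$ and $m_0=6$, the set $Q=\{2,3,4,5,6,10\}$ gives $M(Q)\in\Mc_n$, yet $7,8,9\notin Q$. Second, the numbers $\bs{\frac mk}$ and $\bse{\frac mk}$ for $k\ge2$ do \emph{not} cover $\{1\sco m-1\}$; they are all $\le\bse{\frac m2}$, so for $m=10$ they only reach up to $5$. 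Finally, the phrase ``iterating Lemma~\ref{pow} itself'' signals the circularity: you are invoking the lemma (or its corollary) inside its own proof without a base that makes the induction close. The decisive idea you are missing is the perturbation $X\mapsto X+X^k$, which lets \eqref{conj} and \eqref{comm} do all the work and removes any need for combinatorics on $Q_M$.
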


\begin{proof} Assume that $k>1$ (otherwise, it is nothing to prove).

The matrix $Y:=X+X^k$ is similar to~$X$ (see Corollary~\ref{poly}) and, therefore, lies in~$M$. Besides, $[X,Y]=0$ implying $M\sups\ha{X,Y}\ni Y-X=X^k$.
\end{proof}

\begin{imp}\label{div} If $m\in Q_M$ and $k\in\{1\sco m\}$, then $\bs{\frac{m}{k}},\bse{\frac{m}{k}}\in Q_M\cup\{1\}$.
\end{imp}

\begin{proof} Follows from Lemma~\ref{pow} and, also, Corollaries \ref{inp} and~\ref{sep}.
\end{proof}

\begin{lemma}\label{q2} If $M\ne\{0\}$, then $2\in Q_M$.
\end{lemma}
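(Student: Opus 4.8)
The claim is that if $M\ne\{0\}$ then $2\in Q_M$, i.e. the nilpotent matrix whose Jordan form consists of a single cell of size~$2$ (padded with zeros) lies in~$M$. Since $M\ne\{0\}$, there is some $m\in Q_M$ with $m\ge2$, so $M$ contains a matrix~$X$ whose Jordan form has a cell of size exactly~$m$ and all other cells of size~$1$. I would first use Corollary~\ref{sep} to reduce to the single-cell situation: it suffices to produce, from such an~$X$, an element of~$M$ all of whose nonzero Jordan cells have size~$2$; then $\{2\}\in\Pc_M$, i.e. $2\in Q_M$. Equivalently, working in the $m$-dimensional block, I want to show that the matrix $J_{0,m}$ "generates" a size-$2$ cell inside~$M$.

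\textbf{Main idea.} The tool is Corollary~\ref{div}: if $m\in Q_M$ and $k\in\{1,\dots,m\}$, then $\bs{\frac{m}{k}},\bse{\frac{m}{k}}\in Q_M\cup\{1\}$. Taking $k=\bs{\frac{m}{2}}$ (valid since $m\ge2$ gives $1\le k\le m$), one computes $\bs{\frac{m}{k}}$ and $\bse{\frac{m}{k}}$; for $m\ge2$ at least one of these two values equals~$2$. Concretely, write $m=2k$ or $m=2k+1$: if $m=2k$ then $\frac{m}{k}=2$, so $\bs{\frac{m}{k}}=\bse{\frac{m}{k}}=2$; if $m=2k+1$ then $\frac{m}{k}=2+\frac1k$, so $\bs{\frac{m}{k}}=2$ and $\bse{\frac{m}{k}}=3$ (when $k\ge2$) or $=3$ when $k=1$, $m=3$. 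In every case $2\in\{\bs{\frac{m}{k}},\bse{\frac{m}{k}}\}$. Hence $2\in Q_M\cup\{1\}$, and since $2\ne1$, we get $2\in Q_M$. (I would double-check the edge cases $m=2$: then $k=1$, $\bs{\frac m k}=\bse{\frac m k}=2$; and $m=3$: $k=1$, giving $2$ and $3$, so $2\in Q_M$ — these are precisely the small cases where one must be careful that $k=\bs{m/2}\ge1$.)

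\textbf{Potential obstacle.} The only delicate point is making sure Corollary~\ref{div} is being applied with a legitimate~$k$ and that the floor/ceiling arithmetic genuinely always hits~$2$; once $m\ge 4$ one could also simply take $k$ with $\bs{m/k}=2$, but the uniform choice $k=\bs{m/2}$ is cleanest. No commutator or conjugation estimates are needed here — this lemma is a short arithmetic corollary of the machinery already built (Lemma~\ref{pow}, Corollaries~\ref{inp}, \ref{sep}, \ref{div}). So I expect the proof to be only a couple of lines: pick $m\in Q_M$, set $k:=\bs{\frac m2}$, invoke Corollary~\ref{div}, and observe $2\in\{\bs{\frac m k},\bse{\frac m k}\}\subs Q_M\cup\{1\}$, whence $2\in Q_M$.
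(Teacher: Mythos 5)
Your high‑level strategy—invoke Corollary~\ref{div} to land the value~$2$ in $Q_M\cup\{1\}$ and then discard the~$1$—is exactly the paper's, but your specific choice $k:=\bs{\frac m2}$ fails at $m=3$. With $m=3$ you get $k=1$, so $\frac mk=3$ and $\bs{\frac mk}=\bse{\frac mk}=3$; neither equals~$2$, and your parenthetical "giving $2$ and~$3$" is a miscalculation. The source of the slip is that $\bs{2+\frac1k}=2$ only when $k\ge2$; for $k=1$ the fraction $2+\frac1k=3$ is already an integer, so both the floor and the ceiling are~$3$. Thus your argument as written leaves the case $m=3$ uncovered, and $\{3\}\in\Pc_M$ is a perfectly legitimate possibility you must handle.

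The paper avoids this entirely by taking $k:=m-1$. Then $\frac{m}{m-1}=1+\frac{1}{m-1}\in(1,2]$ for every $m\ge2$, so $\bse{\frac{m}{m-1}}=2$ unconditionally, and Corollary~\ref{div} gives $2\in Q_M\cup\{1\}$, hence $2\in Q_M$. This single choice works uniformly for all $m\ge2$ with no case split. Your proof becomes correct if you simply replace $k=\bs{\frac m2}$ by $k=m-1$ (or, alternatively, patch the $m=3$ case separately, e.g.\ by taking $k=2$ there so that $\bse{\frac32}=2$), but as submitted the argument has a genuine hole.
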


\begin{proof} By condition, $Q_M\ne\es$.

Let $m\in Q_M$ be an arbitrary element. Then, $m\ge2$, $m-1\in\{1\sco m\}$. According to Corollary~\ref{div}, $\bse{\frac{m}{m-1}}\in Q_M\cup\{1\}$. Also,
$\frac{m}{m-1}=1+\frac{1}{m-1}\in(1;2]$, $\bse{\frac{m}{m-1}}=2$.
\end{proof}

\begin{theorem}\label{pm1} If $m\in Q_M$ and $m\le\frac{n}{2}$, then at least one of two following conditions holds\:
\begin{nums}{-1}
\item\label{nei} $m+1,m-1\in Q_M\cup\{1\}$\~
\item\label{prm} $m$~is a~power of $\cha\F$.
\end{nums}
\end{theorem}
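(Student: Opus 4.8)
The plan is to fix an element $m\in Q_M$ with $2\le m\le\frac{n}{2}$ and to produce, inside the algebra $A=\End(\F^n)$, two commuting nilpotent operators $X$ and $Y$ with $h(X)=m$ and $h(Y)=m$ whose commutativity lets us exploit the structure results from \S2. Concretely, I would build a nilpotent matrix $X_0$ consisting of two Jordan cells of size~$m$ (this is possible since $2m\le n$), so that $G(X_0)=\{m,m\}$; by Corollary~\ref{sep} and the definition of $Q_M$ we have $X_0\in M$. I then want to apply Theorem~\ref{comh} or, more directly, Corollary~\ref{hx} to a suitable pair $X,Y$ built from the two cells: take $X$ to be the first cell (size $m$) together with zeros, $Y$ the second cell, so $[X,Y]=0$, $X,Y\in A_n$, $X+Y$ is conjugate to $X_0$, hence $X+Y\in M$ and then $\ha{X,Y}\subs M$ by~\eqref{comm}. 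Every $Z\in\ha{X,Y}$ is then a nilpotent matrix in $M$ with $G(Z)\subs\{m,m\}$ up to the effect of taking linear combinations inside the two size-$m$ cells.

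The key point is to analyse which nilpotent Jordan types actually occur among $Z=aX+bY$ and, more importantly, among powers and further combinations forced by Lemma~\ref{pow}. Since $h(X)=h(Y)=m>\frac n2$ fails (we are in the regime $m\le\frac n2$), I would instead work with the two cells supported on a space of dimension $2m$: the block-diagonal operator $X'$ with two size-$m$ cells satisfies $(X')^m=0$ and $h(X')=m$ with $2m\le n$, and one can perturb one cell by a polynomial to get a second commuting operator $Y'$ with $(Y')^m=0$. The combination $Y'+X'Z'$ with $Z'\in\F[X']$ runs through a family whose Jordan types are governed by Corollary~\ref{inp} applied to $f(J_{0,m})$; these types are exactly $\bc{\bs{\frac mk};\bse{\frac mk}}$ for the relevant $k=k(f)$. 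Now the dichotomy: either the operator $Y'-X'Z'$ can be made to have $h=m$ with a genuinely different cell structure—forcing by Corollary~\ref{div} (with a well-chosen $k$) that $m-1$ and $m+1$ both lie in $Q_M\cup\{1\}$, which is case~\ref{nei}—or the family is so rigid that the only way $X'$ and the perturbation can commute and stay within $M$ is that raising to the $m$-th power is a ring endomorphism, which by the argument of Lemma~\ref{mdeg} forces $m=(\cha\F)^k$, i.e.\ case~\ref{prm}.

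The main obstacle, and the step I expect to require the most care, is exactly this fork: showing that if $m$ is \emph{not} a power of $\cha\F$ then one can genuinely realise a nilpotent matrix in $M$ with a Jordan cell of size $m+1$ (and, by a dual/complementary argument, one of size $m-1$). For the size-$m+1$ cell I would look at an operator of the form $X'+N$ on a $(2m)$-dimensional space, where $N$ links the two size-$m$ cells in a single off-diagonal position, chosen so that $X'$ and $X'+N$ do not commute directly but some commuting pair built from them yields, via a binomial expansion as in Corollary~\ref{por}, a matrix whose nilpotency degree is $m+1$ precisely when the binomial coefficient $\binom{m}{1}=m$ (or some $\binom{m}{j}$) is nonzero in $\F$, i.e.\ when $\cha\F\nmid m$ appropriately; combined with $\cha\F\nmid m$ for all the relevant intermediate coefficients this amounts to $m$ not being a power of $\cha\F$. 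Since $m+1\le\frac n2+1\le n$ this new type lies in $\Pc(n)$, so $m+1\in Q_M\cup\{1\}$. The containment $m-1\in Q_M\cup\{1\}$ then follows from $m\in Q_M$ together with Corollary~\ref{div} applied with a divisor giving quotient ceiling $m-1$, or by an analogous small-perturbation construction. Assembling these gives case~\ref{nei} whenever case~\ref{prm} fails, which is the asserted dichotomy. I would take care throughout to keep all constructed operators inside $A_n$ and to invoke~\eqref{comm} only on pairs whose commutativity has been checked, using Lemma~\ref{cent} to recognise centralizers of single Jordan cells.
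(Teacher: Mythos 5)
Your high-level strategy matches the paper's: work inside a $2m$-dimensional subspace carrying two Jordan cells of size~$m$ (noting $2m\le n$), exhibit two commuting operators in~$M$ of Jordan type $\{m,m\}$, and show that some linear combination of them has Jordan type $\{m+1,m-1\}$, whence $m\pm1\in Q_M\cup\{1\}$ by Corollary~\ref{sep}. However the concrete constructions you sketch either do not work or are misanalysed, and the gap is in precisely the place you yourself flag as needing the most care.

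First, your opening construction is a dead end. If $X$ is the first size-$m$ cell padded by zeros and $Y$ is the second, then every $Z=aX+bY$ is again block-diagonal with blocks $aJ_{0,m}$ and $bJ_{0,m}$; its Jordan cells all have size at most~$m$, and the only types you obtain are $\{m,m\}$, $\{m\}$, or $\es$. You will never see a cell of size $m+1$ this way. The polynomial-perturbation variant $Y'+X'Z'$ with $Z'\in\F[X']$ has the same defect: it stays inside the commutative subalgebra generated by one cell and cannot change the block structure. The paper instead takes the block matrix $Z_{a,b}=\rbmat{aJ&E\\0&bJ}$ with an off-diagonal identity block~$E$ linking the two cells, together with $Z=\rbmat{J&0\\0&J}$; these commute, and $Z_{a,b}$ has Jordan type $\{m,m\}$ or $\{m+1,m-1\}$ according as the quantity $S_m(a,b)=\sum_{i+j=m-1}a^ib^j$ vanishes or not. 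You gesture at an \emph{off-diagonal linking term} $N$ in your third paragraph, which is the right idea, but you never make $N$ commute with anything useful, nor do you analyse the resulting Jordan type correctly.

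Second, your criterion for when the fork can be crossed is wrong. You propose that the size-$(m+1)$ cell appears precisely when some binomial coefficient $\binom{m}{j}$ is nonzero in~$\F$, and you equate this with ``$m$ is not a power of $\cha\F$''. Neither implication holds: for instance with $\cha\F=2$ and $m=6$ (not a power of~$2$) one has $\binom{6}{1}=0$ but $\binom{6}{2}\ne0$ in~$\F$, so no single $\binom{m}{j}$ captures the dichotomy, and in general ``$\cha\F\nmid m$'' is strictly stronger than ``$m$ is not a power of $\cha\F$''. The correct invariant, used in the paper, is whether the group $U_m=\{\ep\in\F^*:\ep^m=1\}$ is nontrivial; since $\F=\ol\F$, this fails exactly when $m$ is a power of $\cha\F$. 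Given a nontrivial $\ep_0\in U_m$, one sets $Z_{1,\ep_0}$ (which has type $\{m,m\}$ since $S_m(1,\ep_0)=0$) and then shows, by a finiteness argument on solutions of $S_m(t+1,t+\ep_0)=0$, that some $t\in\F$ makes $tZ+Z_{1,\ep_0}=Z_{t+1,t+\ep_0}$ of type $\{m+1,m-1\}$. This single linear combination gives both $m+1$ and $m-1$ at once.

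Third, your fallback for $m-1$ is false: Corollary~\ref{div} yields only $\bs{m/k}$ and $\bse{m/k}$ for $k\ge1$, and for $k\ge2$ these are at most $\bse{m/2}$, which is strictly less than $m-1$ once $m\ge4$. So you cannot obtain $m-1\in Q_M\cup\{1\}$ from $m\in Q_M$ by Corollary~\ref{div} alone; you need the $\{m+1,m-1\}$ Jordan type from the linking construction, as in the paper.
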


\begin{proof} By Corollary~\ref{sep}, $\{m,m\}\in\Pc_M$.

The subgroup $U_m:=\{\ep\in\F^*\cln\ep^m=1\}\subs\F^*$ is finite. Assume that it is nontrivial (otherwise, \ref{prm}~holds).

We will naturally identify the algebra $M_{2m}(\F)$ with $M_2\br{M_m(\F)}$, i.\,e. partition $\br{(2m)\bw\times(2m)}$\dh matrices into $(m\times m)$\dh blocks.
Set $J:=J_{0,m}\in M_m(\F)$.

Let $a,b\in\F^*$ be arbitrary elements. In the algebra $M_{2m}(\F)$, define the matrices
\equ{
Z:=\rbmat{J&0\\0&J};\quad\quad Z_{a,b}:=\rbmat{aJ&E\\0&bJ}.}
Clearly, $J^m=0$, $Z^m=0$, and $[Z,Z_{a,b}]=0$. Besides, for any $k\in\N$, we have
\begin{gather*}
Z_{a,b}^k=\rbmat{a^k J^k & I_{a,b}^{(k)}\\0 & b^k J^k},\\
I_{a,b}^{(k)}:=\sums{\sst{i,j\ge0;\\i+j=k-1}}\br{(aJ)^i\cdot E\cdot(bJ)^j}=\br{S_k(a,b)}\cdot J^{k-1}\in M_m(\F),\\
S_k(a,b):=\sums{\sst{i,j\ge0;\\i+j=k-1}}(a^ib^j)\in\F.
\end{gather*}
It is easy to see that $\br{S_k(a,b)}\cdot(a-b)=a^k-b^k$ ($k\in\N$) and, also,
\equ{\begin{aligned}
&Z_{a,b}^{m+1}=0\ne Z_{a,b}^{m-1};\\
&(Z_{a,b}^m=0)\quad\Lra\quad\br{S_m(a,b)=0}.
\end{aligned}}
Show that $\dep(Z_{a,b})=2$. Indeed, the subspace $\Ker(Z_{a,b})\subs\F^{2m}\cong(\F^m)^2$ consists exactly of all vectors $\rbmat{x\\y}$ such that $aJx+y=bJy=0$,
i.\,e. $y=c_1e_1$, $x=-\frac{c_1}{a}e_2+c_2e_1$ ($c_i\in\F$) and, hence, is two-dimensional. Thereafter, the matrix~$Z_{a,b}$ is nilpotent, and its Jordan form
contains exactly two cells with the set of sizes
\equ{
\bc{h(Z_{a,b}),2m-h(Z_{a,b})}=\case{
\{m,m\},&S_m(a,b)=0;\\
\{m+1,m-1\},&S_m(a,b)\ne0.}}

Let $\ep_0\in U_m\sm\{1\}$ be an arbitrary element. Then $1-\ep_0^m=0\ne1-\ep_0$ implying $S_m(1,\ep_0)\bw=0$. Hence, the Jordan form of each of the matrices $Z$
and~$Z_{1,\ep_0}$ contains two cells~$J_{0,m}$. Prove that, for some element $t\in\F$, the Jordan form of the matrix $tZ+Z_{1,\ep_0}=Z_{t+1,t+\ep_0}$
contains to cells~$J_{0,m\pm1}$. For this, it suffices that $t+1,t+\ep_0\ne0$ and $S_m(t+1,t+\ep_0)\ne0$. If $S_m(t+1,t+\ep_0)=0$, then $(t+\ep_0)^m=(t+1)^m$
and $t+\ep_0\ne t+1$ implying $t+\ep_0\bw=\ep(t+1)$ ($\ep\in U_m\sm\{1\}$), $t=\frac{\ep-\ep_0}{1-\ep}$. Since $|U_m|<\bes$, the number of elements $t\in\F$ such that
$S_m(t+1,t+\ep_0)=0$ is finite. Consequently, there exists an element $t\in\F$ satisfying $t\ne-1,-\ep_0$ and $S_m(t+1,t+\ep_0)\ne0$,\ti this element is as required.
Since $2m\le n$, there exist matrices $X,Y\in A_n$ such that $[X,Y]=0$, $G(X)=G(Y)=\{m,m\}$, and the set of the cell sizes of the Jordan form of the matrix $tX+Y\in A_n$
is $\{m+1,m-1,\und{1\sco1}_{n-2m}\}$. Also, $\{m,m\}\in\Pc_M$, $X,Y\in M$, and, thus, $M\sups\ha{X,Y}\ni tX+Y$, $G(tX+Y)\in\Pc_M$. According to Corollary~\ref{sep},
\ref{nei}~holds.
\end{proof}

\begin{theorem}\label{p2} If $m,m_1\in Q_M\cup\{1\}$, $m_1>m+2$, and $m+m_1\le n$, then $m+2\in Q_M$.
\end{theorem}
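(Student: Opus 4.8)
The plan is to realize a matrix whose Jordan form combines a cell of size~$m_1$ with a cell of size~$m$ and then hit it with a nilpotent operator producing cells of sizes $m$ and $m+2$, so that Corollary~\ref{sep} gives $m+2\in Q_M$. Since $m,m_1\in Q_M\cup\{1\}$ and $m+m_1\le n$, by Corollary~\ref{sep} the set $\{m,m_1\}$ (ignoring the unit entries, which can be padded up to size~$n$) lies in~$\Pc_M$, so there is an operator $X\in A_n$ with $X\in M$ whose Jordan form has one cell of size~$m_1$, one of size~$m$, and the rest of size~$1$. Work inside the $X$\dh invariant subspace of dimension $m+m_1$ spanned by these two cells; on it $X$ acts as $J_{0,m_1}\oplus J_{0,m}$.

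First I would exhibit, on $\F^{m_1}\oplus\F^m$, a nilpotent operator $Y$ commuting with $J_{0,m_1}\oplus J_{0,m}$ and a scalar $t$ such that $tX+Y$ has Jordan type $\{m+2,m\}$ (or at least contains cells of sizes $m$ and $m+2$ after accounting for unit cells). The natural candidate is to perturb $X$ by an off-diagonal block: take $Y$ with an $(m_1\times m)$ block mapping the small cell into the large one, composed with suitable powers of the nilpotent Jordan blocks so as to preserve commutativity. Concretely, $Y$ should be built from $J_{0,m_1}$, $J_{0,m}$ and a single ``gluing'' map $\F^m\to\F^{m_1}$ that sends the top of the $m$\dh cell near the bottom of the $m_1$\dh cell; one then computes the ranks $\rk\bigl((tX+Y)^p\bigr)$ for all $p$ and reads off the cell sizes via $\dep\bigl((tX+Y)^p\bigr)-\dep\bigl((tX+Y)^{p-1}\bigr)$, exactly as in the setup of Lemma~\ref{int} and Theorem~\ref{pm1}. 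The point of the gap hypothesis $m_1>m+2$ is to guarantee that the unique long cell has length at least $m+3>m+2$, so that merging it down with the short cell can only \emph{increase} the short cell's contribution by the right amount, yielding $m$ and $m+2$ and not something larger; the inequality $m+m_1\le n$ merely ensures everything fits inside $A_n$.

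With such $X,Y$ in hand, the argument closes as in the previous theorems\: $\{m,m_1\}\in\Pc_M$ gives $X\in M$ and the analogous operator with the roles arranged to also produce $Y\in M$ (or, more directly, $Y$ similar to $X$ via Corollary~\ref{poly} after an appropriate choice, so $Y\in M$), and since $[X,Y]=0$ the commutative subspace $\ha{X,Y}$ lies in~$M$, whence $tX+Y\in M$ and $G(tX+Y)\in\Pc_M$; Corollary~\ref{sep} then extracts $\{m+2\}\in\Pc_M$, i.e. $m+2\in Q_M$. As with Theorem~\ref{pm1}, one must make sure the scalar~$t$ can be chosen to avoid the finitely many bad values where the rank pattern degenerates (using $|\F|=\bes$); since $\F$ is algebraically closed this is automatic.

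The main obstacle is the explicit block computation\: writing down a commuting nilpotent $Y$ (a combination of $J_{0,m_1}$, $J_{0,m}$ and one gluing block, with the gluing block multiplied by the correct powers of the Jordan nilpotents to commute with~$X$) and verifying that $tX+Y$ has exactly Jordan type $\{m+2,m\}$ for generic~$t$. This amounts to a careful rank count of $(tX+Y)^p$ for $1\le p\le m+2$; I expect the combinatorics of which block entries survive under powering — and checking that the gap $m_1>m+2$ forces the long cell to split off cleanly at size $m+2$ rather than $m+1$ or $m+3$ — to be the delicate part, with everything else being the by\dh now routine passage through Corollaries \ref{poly} and~\ref{sep}.
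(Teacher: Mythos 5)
Your framing is right\ti produce commuting nilpotent $X,Y\in M$ with a linear combination whose Jordan type contains an $(m+2)$\dh cell, then invoke Corollary~\ref{sep}\ti but the construction you sketch does not deliver such a pair, and that construction is exactly where the theorem lives. Both routes you offer for a commuting $Y\in M$ break down. If $Y=f(X)$ with $k(f)=1$ (your appeal to Corollary~\ref{poly}), then $tX+Y=g(X)$ is again a polynomial in $X=J_{0,m_1}\oplus J_{0,m}$, and by Corollary~\ref{inp} its cell sizes are $\bs{\frac{m_1}{k(g)}},\bse{\frac{m_1}{k(g)}},\bs{\frac{m}{k(g)}},\bse{\frac{m}{k(g)}}$; for, say, $m=2$, $m_1=9$ no choice of $k(g)$ makes any of these equal to $m+2=4$, so the hypothesis $m_1>m+2$ alone gives no mechanism. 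If instead $Y$ is a pure off\dh diagonal block intertwining $J_{0,m}$ into $J_{0,m_1}$, then $Y^2=0$ and $(tX+Y)^{m_1-1}$ has $(1,1)$\dh block $t^{m_1-1}J_{0,m_1}^{m_1-1}\ne0$, so $h(tX+Y)=m_1$ still\: an off\dh diagonal perturbation never shortens the long cell. Mixing in a diagonal part $p(J_{0,m_1})\oplus s(J_{0,m})$ to cancel the degree\dh one term only returns you to a cell size of the form $\bse{\frac{m_1}{k}}$, $k\ge2$. Your heuristic that $m_1>m+2$ ``forces the long cell to split off cleanly at size $m+2$'' has no computation behind it, and there is no reason a commuting perturbation inside $\End(\F^{m_1}\oplus\F^m)$ produces an $(m+2)$\dh cell.

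The paper reaches the $(m+2)$\dh cell by a genuinely different device\: it starts not from $J_{0,m_1}\oplus J_{0,m}$ but from $J_{0,m_1}\oplus J_{0,m_2}$ with $m_2:=m+2$, on a space $V=V_1\oplus V_2$ of dimension $m_1+m_2$, with $Z_1,Z_2$ the two cells extended by zero and $Z_0:=Z_1+Z_2$. It then passes to a quotient $W/U$ of dimension $m_1+m_2-2=m_1+m$, where $W$ is the hyperplane gluing the chain bottoms $e_{1,m_1},e_{2,m_2}$ and $U$ is the line gluing the chain tops $e_{1,1},e_{2,1}$. On $W/U$ the induced $\wt{Z}_1$ has exactly one nontrivial cell, of size $m_1$; $\wt{Z}_2$ exactly one, of size $m_2=m+2$; and $\wt{Z}_0=\wt{Z}_1+\wt{Z}_2$ has cells of sizes $m_1$ and $m$ (here $m_1>m_2$, i.e.\ $m_1>m+2$, is what keeps $h(\wt{Z}_0)=m_1$). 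Padding with unit cells to land in $A_n$ (possible since $m_1+m\le n$) gives commuting $X_0,X_1\in A_n$ of Jordan types $\{m,m_1,1,\dots\}$ and $\{m_1,1,\dots\}$, both in~$M$ because $m,m_1\in Q_M\cup\{1\}$ and Corollary~\ref{sep} applies, and then $X_2:=X_0-X_1\in\ha{X_0,X_1}\subs M$ has type $\{m+2,1,\dots\}$, forcing $m+2\in Q_M$. This quotient of a \emph{larger} space built from the target size $m+2$\ti not a perturbation of $J_{0,m_1}\oplus J_{0,m}$ on its own support\ti is the missing idea, and filling it in is not a routine rank count but the substance of the proof.
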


\begin{proof} Set $m_2:=m+2$. By condition, $m_1>m_2>2$, $m_2\le m_1-1$.

For any $i\in\{1;2\}$, consider an $m_i$\dh dimensional space~$V_i$, in it\ti a~basis $(e_{i,1}\sco e_{i,m_i})$ and the operator~$Z_i$ with matrix~$J_{0,m_i}$
in this basis. <<Continue>> the operators $Z_1$ and~$Z_2$ onto the space $V:=V_1\oplus V_2$ by the rule $Z_iV_j:=0$ ($i\ne j$).
Clearly, $Z_1Z_2=Z_2Z_1=0$. For the operator $Z_0:=Z_1+Z_2\in\End(V)$ and the subspaces
\equ{\begin{aligned}
&W:=\ha{e_{i,k}\cln i=1,2;\,k<m_i}\oplus\ha{e_{1,m_1}+e_{2,m_2}}\subs V;\\
&U:=\ha{e_{1,1}+e_{2,1}}\subs V,
\end{aligned}}
we have $U\subs W$, $\dim(W/U)=m_1+m_2-2=m_1+m\le n$, $Z_iU=0$, and $Z_iV\subs W$ ($0\le i\le2$). Consequently, in the space $W/U$, the operator $Z_1$ (resp.~$Z_2$)
induces an operator $\wt{Z}_1$ (resp.~$\wt{Z}_2$) satisfying $\wt{Z}_1\wt{Z}_2=\wt{Z}_2\wt{Z}_1=0$ and $\wt{Z}_0=\wt{Z}_1+\wt{Z}_2$. Thus, $[\wt{Z}_i,\wt{Z}_j]=0$
($0\le i,j\le2$) and $\wt{Z}_0^k=\wt{Z}_1^k+\wt{Z}_2^k$ ($k\in\N$).

Let $i\in\{1;2\}$ be an arbitrary number. Find the Jordan form of the operator~$\wt{Z}_i$.

We have $Z_i^{m_i}=0$ and $Z_i^{m_i-1}(e_{1,m_1}+e_{2,m_2})=e_{i,1}\notin U$ that follows $\wt{Z}_i^{m_i}=0\ne\wt{Z}_i^{m_i-1}$. Further, for the subspace
$W_i:=\ha{e_{i,k}\cln k<m_i}$, we have $Z_iW=W_i$ and $W_i\cap U=0$ implying $\rk\wt{Z}_i=\dim W_i=m_i-1$, $\dep\br{\wt{Z}_i}=\dim(W/U)-m_i+1$.
Therefore, $\wt{Z}_i$ is a~nilpotent operator whose Jordan form contains exactly $\dim(W/U)-m_i+1$ cells with the maximal size~$m_i$, i.\,e. with the
set of sizes $\{m_i,\und{1\sco1}_{\dim(W/U)-m_i}\}$.

Now, find the Jordan form of the operator~$\wt{Z}_0$. As said above, $m_2\le m_1-1$, and, hence, $\wt{Z}_1^{m_1}=\wt{Z}_2^{m_1-1}=0\ne\wt{Z}_1^{m_1-1}$,
$\wt{Z}_0^{m_1}=\wt{Z}_1^{m_1}+\wt{Z}_2^{m_1}=0$, $\wt{Z}_0^{m_1-1}=\wt{Z}_1^{m_1-1}+\wt{Z}_2^{m_1-1}\ne0$. Further,
$Z_0^{-1}(U)=\ha{e_{1,1},e_{2,1},e_{1,2}+e_{2,2}}\sups U$ implying $\dep\br{\wt{Z}_0}\le\dim\br{Z_0^{-1}(U)}-\dim U=2$. Therefore, $\wt{Z}_0$ is a~nilpotent
operator whose Jordan form contains at most two cells with the maximal size~$m_1$, i.\,e. with the set of sizes $\{m,m_1\}$.

Since $\dim(W/U)\le n$, there exist pairwise commuting matrices $X_0,X_1,X_2\bw=X_0\bw-X_1\in A_n$ with the sets of the cell sizes of Jordan forms respectively
$\{m,m_1,\und{1\sco1}_{n-(m+m_1)}\}$, $\{m_1,\und{1\sco1}_{n-m_1}\}$, and $\{m_2,\und{1\sco1}_{n-m_2}\}$. Note that $m,m_1\in Q_M\cup\{1\}$, and, by
Corollary~\ref{sep}, $G(X_{0,1})\in\Pc_M$, $X_{0,1}\in M$ implying $M\sups\ha{X_0,X_1}\ni X_2$, $G(X_2)\in\Pc_M$. On the other hand, $G(X_2)=\{m_2\}$. Hence, $m_2\in Q_M$.
\end{proof}

Suppose that $M\ne\{0\}$.

Due to Lemma~\ref{q2}, there exists a~maximal number $m_0\in\BC{2\sco\bs{\frac{n}{2}}+1}$ such that $2\sco m_0\in Q_M$. If $m_0\ne\bs{\frac{n}{2}}+1$, then
$m_0\le\bs{\frac{n}{2}}\le\frac{n}{2}$, $m_0+1\notin Q_M$, and, by Theorem~\ref{pm1}, $m_0$~is a~power of $\cha\F$. Clearly, $m':=m_0-1\in Q_M\cup\{1\}$.

Show that each number from $Q_M\sm\{2\sco m_0\}$ is at most $2m_0$ and at least $n-m_0+2$, assuming that $m_0\le\bs{\frac{n}{2}}$ and $m_0+1\notin Q_M$ (once
$m_0=\bs{\frac{n}{2}}+1$, it holds automatically as noted in~\Ss\ref{introd}).

Let $m_1\in Q_M$ be an arbitrary number greater than~$m_0$. We have $m'+2=m_0+1\notin Q_M$ implying $m_1>m_0+1=m'+2$. Besides, $m',m_1\in Q_M\cup\{1\}$, and, according
to Theorem~\ref{p2}, $m'+m_1>n$, $m_1+m_0=m_1+m'+1>n+1$, $m_1+m_0\ge n+2$. Hence, $m_1\ge n-m_0+2$ and, also, $2m_1>m_1+m_0\ge n+2$, $m_1>\frac{n}{2}+1$.

Suppose that there exists a~number $m\in Q_M$ greater than $2m_0$. By Corollary~\ref{div}, $m_1:=\bse{\frac{m}{2}}\in Q_M\cup\{1\}$. Also, $m_1\ge\frac{m}{2}>m_0>1$
implying $m_1\in Q_M$. Consequently, $m_1>\frac{n}{2}+1$. On the other hand, $m_1\le\bse{\frac{n}{2}}<\frac{n}{2}+1$. So, we came to a~contradiction.

Thus, all numbers from $Q_M\sm\{2\sco m_0\}$ are at most $2m_0$ and at least $n-m_0+2$.

So, we proved the <<only if>> statement in Theorem~\ref{main}. Let us prove the <<if>> one.

Suppose that a~subset $Q\subs\{2\sco n\}$ and a~number $m_0\in\BC{2\sco\bs{\frac{n}{2}}+1}$ equal either to $\bs{\frac{n}{2}}+1$ or to $(\cha\F)^k$ \ter{$k\in\N$}
satisfy \eqref{fir} and~\eqref{oth}. Show that $M(Q)\in\Mc_n$.

The inequality $\bs{\frac{n}{2}}+1>\frac{n}{2}$, Corollary~\ref{mhal}, and Lemma~\ref{mdeg} imply that, if $X_1,X_2\in A_n$ and $X_1^{m_0}=X_2^{m_0}=[X_1,X_2]=0$,
then $Y^{m_0}=0$ for any $Y\in\ha{X_1,X_2}$.

Let $X_1,X_2\in M(Q)$ be arbitrary commuting matrices. In view of their nilpotence, $\ha{X_1,X_2}\subs A_n$. It is required to prove that $\ha{X_1,X_2}\subs M(Q)$.
Assume that there exists a~matrix $Y\in\ha{X_1,X_2}\subs A_n$ not belonging to $M(Q)$. Its Jordan form contains a~cell of size $l\notin Q\cup\{1\}$, and, hence, $l>m_0$,
$Y^{m_0}\ne0$. Consequently, there exists a~number $i\in\{1,2\}$ such that $X_i^{m_0}\ne0$, $m:=h(X_i)>m_0$. Since $X_i\in M(Q)$, we have $m=h(X_i)\in Q\cup\{1\}$,
$m\ne l$. Further, $m>m_0>1$ implying $m\in Q$, $n-m_0+2\le m\le2m_0$, $2m>m+m_0\ge n+2$, $m>\frac{n}{2}+1$. Therefore, $m_0\ge n-m+2$ and
$\bse{\frac{m}{2}}\le\bse{\frac{2m_0}{2}}=m_0$. Thus, $m>\frac{n}{2}+1$, $l>m_0\ge n-m+2,\bse{\frac{m}{2}}$, and $l\ne m$. Finally, $[X_i,Y]=0$. It contradicts
Corollary~\ref{hx} and shows that $\ha{X_1,X_2}\subs M(Q)$.

So, we completely proved Theorem~\ref{main} and, consequently, Theorem~\ref{submain}.

\section*{Acknowledgements}

The author is grateful to Prof. \fbox{E.\,B.\?Vinberg} for exciting an interest to fundamental algebraic science.

The author dedicates the article to E.\,N.\?Troshina.

\newpage

{\renewcommand{\refname}{References}
}

\end{document}